\newtheorem{theorem}{Theorem}[section]
\newtheorem{proposition}[theorem]{Proposition}
\newtheorem{corollary}[theorem]{Corollary}
\definecolor{ddarkbrown}{rgb}{0.5,0.2,0.05} \definecolor{bbluegray}{rgb}{0.05,0,0.5}
\newcommand{\BEAS}{\begin{eqnarray*}}
\newcommand{\EEAS}{\end{eqnarray*}}
\newcommand{\BEA}{\begin{eqnarray}}
\newcommand{\EEA}{\end{eqnarray}}
\newcommand{\BEQ}{\begin{equation}}
\newcommand{\EEQ}{\end{equation}}
\newcommand{\BIT}{\begin{itemize}}
\newcommand{\EIT}{\end{itemize}}
\newcommand{\BNUM}{\begin{enumerate}}
\newcommand{\ENUM}{\end{enumerate}}
\newcommand{\BA}{\begin{array}}
\newcommand{\EA}{\end{array}}
\newcommand{\ones}{\mathbf 1}
\newcommand{\reals}{{\mathbb R}}
\newcommand{\E}{\mathbb{E}}
\newcommand{\idm}{\mathbf{I}}
\newcommand{\argmin}{\mathop{\rm argmin}}
\title{Nonlinear Acceleration of Stochastic Algorithms}
\author{Damien Scieur}
\address{INRIA \& D.I.,\vskip 0ex
\'Ecole Normale Sup\'erieure, Paris, France.}
\email{damien.scieur@inria.fr}
\author{Alexandre d'Aspremont}
\address{CNRS \& D.I., UMR 8548,\vskip 0ex
\'Ecole Normale Sup\'erieure, Paris, France.}
\email{aspremon@ens.fr}
\author{Francis Bach}
\address{INRIA \& D.I.\vskip 0ex
\'Ecole Normale Sup\'erieure, Paris, France.}
\email{francis.bach@inria.fr}
\keywords{acceleration, stochastic, extrapolation.}
\begin{document}

\begin{abstract}
Extrapolation methods use the last few iterates of an optimization algorithm to produce a better estimate of the optimum. They were shown to achieve optimal convergence rates in a deterministic setting using simple gradient iterates. Here, we study extrapolation methods in a stochastic setting, where the iterates are produced by either a simple or an accelerated stochastic gradient algorithm. We first derive convergence bounds for arbitrary, potentially biased perturbations, then produce asymptotic bounds using the ratio between the variance of the noise and the accuracy of the current point. Finally, we apply this acceleration technique to stochastic algorithms such as SGD, SAGA, SVRG and Katyusha in different settings, and show significant performance gains.
\end{abstract}

\maketitle

\section{Introduction}

We focus on the problem
\BEA
\min_{x\in \mathbb{R}^d} f(x) \label{eq:min_problem_intro}
\EEA
where $f(x)$ is a smooth and strongly convex function with respect to the Euclidean norm. We consider a stochastic first-order oracle, which gives a noisy estimate of the gradient of $f(x)$, with
\BEA
\nabla_\varepsilon f(x) = \nabla f(x) + \varepsilon, \label{eq:stoch_oracle}
\EEA
where $\varepsilon$ is a noise term with bounded variance. This is the case for example when $f$ is a sum of strongly convex functions, and we only have access to the gradient of one randomly selected function. Stochastic optimization \eqref{eq:stoch_oracle} is typically challenging as classical algorithms are not convergent (for example, gradient descent or Nesterov's accelerated gradient). Even the averaged version of stochastic gradient descent with constant step size does not converge to the solution of \eqref{eq:min_problem_intro}, but to another point whose proximity to the real minimizer depends of the step size \citep{nedic2001convergence,moulines2011non}.

When $f$ is a finite sum of $N$ functions, then algorithms such as SAG \citep{schmidt2013minimizing}, SAGA \citep{defazio2014saga}, SDCA \citep{shalev2013stochastic} and SVRG \citep{johnson2013accelerating} accelerate convergence using a variance reduction technique akin to control variate in Monte-Carlo methods. Their rate of convergence depends of $1-\mu/L$ and thus does exhibit an accelerated rate on par with the deterministic setting (in $1-\sqrt{\mu/L}$). Recently a generic acceleration algorithm called Catalyst \citep{lin2015universal}, based on the proximal point methods improved this rate of convergence, at least in theory. Unfortunately, numerical experiments show this algorithm to be conservative, thus limiting practical performances. On the other hand, recent papers, for example \citep{shalev2014accelerated} (Accelerated SDCA) and \citep{allen2016katyusha} (Katyusha), propose algorithms with accelerated convergence rates, if the strong convexity parameter is given.

When $f$ is a quadratic function then averaged SGD converges, but the rate of decay of initial conditions is very slow. Recently, some results have focused on accelerated versions of SGD for quadratic optimization, showing that with a two step recursion it is possible to enjoy both the optimal rate for the bias term and the variance \citep{flammarion2015averaging}, given an estimate of the ratio between the distance to the solution and the variance of $\varepsilon$.

A novel generic acceleration technique was recently proposed by \citet{scieur2016regularized} in the deterministic setting. This uses iterates from a slow algorithm to extrapolate estimates of the solution with asymptotically optimal convergence rate. Moreover, this rate is reached \textit{without prior knowledge of the strong convexity constant}, whose online estimation is still a challenge, even in the deterministic case \citep{fercoq2016restarting}.

Convergence bounds are derived by \citet{scieur2016regularized}, tracking the difference between the deterministic first-order oracle of \eqref{eq:min_problem_intro} and iterates from a linearized model. The main contribution of this paper is to extend the analysis to arbitrary perturbations, including stochastic ones, and to present numerical results when this acceleration method is used to speed up stochastic optimization algorithms.

In Section~2 we recall the extrapolation algorithm, and quickly summarize its main convergence bounds in Section~3. In Section~4, we consider a stochastic oracle and analyze its asymptotic convergence in Section 5. Finally, in Section 6 we describe numerical experiments which confirm the theoretical bounds and show the practical efficiency of this acceleration.

\section{Regularized Nonlinear Acceleration}

Consider the optimization problem
\[
	\min_{x\in \mathbb{R}^d}f(x)
\]
where $f(x)$ is a $L-$smooth and $\mu-$strongly convex function \citep{Nest03a}. Applying the fixed-step gradient method to this problem yields the following iterates
\BEQ
	\tilde x_{t+1} = \tilde x_t - \frac{1}{L} \nabla f(\tilde x_t). \label{eq:grad_method}
\EEQ
Let $x^*$ be the unique optimal point, this algorithm is proved to converge with
\BEQ
	\|\tilde x_t-x^*\| \leq (1-\kappa)^t\|\tilde x_0-x^*\| \label{eq:rate_gradient}
\EEQ
where $\|\cdot\|$ stands for the $\ell_2$ norm and $\kappa = {\mu}/{L} \in [0,1[$ is the (inverse of the) condition number of $f$ \citep{Nest03a}. Using a two-step recurrence, the \emph{accelerated gradient descent} by \citet{Nest03a} achieves an improved convergence rate
\BEQ
	\|\tilde x_t-x^*\| \leq O\Big((1-\sqrt{\kappa})^t\|\tilde x_0-x^*\|\Big). \label{eq:rate_nesterov}
\EEQ
Indeed, \eqref{eq:rate_nesterov} converges faster than \eqref{eq:rate_gradient} but the accelerated algorithm requires the knowledge of $\mu$ and $L$. Extrapolation techniques however obtain a similar convergence rate, but do not need estimates of the parameters $\mu$ and $L$. The idea is based on the comparison between the process followed by $\tilde x_i$ with a \textit{linearized} model around the optimum, written
\[
	x_{t+1} = x_{t}- \frac{1}{L}\left( \nabla f(x^*) + \nabla^2f(x^*)(x_t-x^*)\right), \quad x_0 = \tilde x_0. 
\]
which can be rewritten as
\BEQ\label{eq:linear_algo}
x_{t+1}-x^*=\Big(\idm-\frac{1}{L}\nabla^2f(x^*)\Big) (x_t-x^*), \quad x_0 = \tilde x_0.
\EEQ
A better estimate of the optimum in \eqref{eq:linear_algo} can be obtained by forming a linear combination of the iterates (see \citep{anderson1965iterative,Caba76,Mesi77}), with
\[
	\Big\|\sum_{i=0}^t c_i x_i -x^*\Big\| \ll \|x_t-x^*\|,
\]
for some specific $c_i$ (either data driven, or derived from Chebyshev polynomials). These procedures were limited to quadratic functions only, i.e. when $\tilde x_i = x_i$ but this was recently extended to generic convex problems by \citet{scieur2016regularized} and we briefly recall these results below.

To simplify the notations, we define the function $g(x)$ and the step
\BEQ
	\tilde x_{t+1} = g(\tilde x_t), \label{eq:fixed_point_iteration}
\EEQ
where $g(x)$ is differentiable, Lipchitz-continuous with constant $(1-\kappa) <1$, $g(x^*) = x^*$ and $g'(x^*)$ is symmetric. For example, the gradient method \eqref{eq:grad_method} matches exactly this definition with $g(x) = x-\nabla f(x)/L$. Running $k$ steps of \eqref{eq:fixed_point_iteration} produces a sequence $\{\tilde x_0,\,...,\, \tilde x_k\}$, which we extrapolate using Algorithm \ref{algo:acc_fixedpoint_reg} from \citet{scieur2016regularized}.

\begin{algorithm}[H]
	\caption{Regularized Nonlinear Acceleration (\textbf{RNA})}
	\label{algo:acc_fixedpoint_reg}
	\begin{algorithmic}[1]
		\REQUIRE Iterates $\tilde x_0, \tilde x_1,..., \tilde x_{k+1}\in\reals^d$ produced by~\eqref{eq:fixed_point_iteration}, and a regularization parameter $\lambda > 0$.
		\STATE Compute $\tilde R = [\tilde r_0,...,\tilde r_k]$, where $\tilde r_i = \tilde x_{i+1}-\tilde x_i$ is the $i^{th}$ residue.
		\STATE Solve 
		\[
		\tilde c^\lambda = \argmin\limits_{c^T1 = 1} \|\tilde Rc\|^2 + \lambda \|c\|^2,
		\] 
		or equivalently solve $(\tilde R^T\tilde R+\lambda I) z = \ones$ then set $\tilde c^\lambda = {z}/{\ones^Tz}$.
		\ENSURE Approximation of $x^*$ computed as $\sum_{i=0}^{k} \tilde c^{\lambda}_i \tilde x_i$
	\end{algorithmic}
\end{algorithm}

For a good choice of $\lambda$, the output of Algorithm \eqref{algo:acc_fixedpoint_reg} is a much better estimate of the optimum than $\tilde x_{k+1}$ (or any other points of the sequence). Using a simple grid search on a few values of $\lambda$ is usually sufficient to improve convergence (see \citep{scieur2016regularized} for more details).

\section{Convergence of Regularized Nonlinear Acceleration}

We quickly summarize the argument behind the convergence of Algorithm \eqref{algo:acc_fixedpoint_reg}. The theoretical bound compare $\tilde x_i$ to the iterates produced by the linearized model
\BEQ
	x_{t+1} = x^* + \nabla g(x^*)(x_t-x^*), \quad x_0 = \tilde x_0. \label{linearized_fixed_point}
\EEQ

We write $c^\lambda$  the coefficients computed by Algorithm \eqref{algo:acc_fixedpoint_reg} from the ``linearized'' sequence $\{x_0,\,...,\, x_{k+1}\}$ and the error term can be decomposed into three parts,
\BEQ
	\Big\| \sum_{i=0}^{k} \tilde c^{\lambda}_i \tilde x_i - x^*\Big\| \leq 
	\underbrace{ \Big\|\sum_{i=0}^{k}  c^{\lambda}_i  x_i - x^* \Big\| }_{\textbf{Acceleration}}
	+ \underbrace{ \Big\|\sum_{i=0}^{k} \Big( \tilde c^{\lambda}_i - c^{\lambda}_i \Big) (x_i-x^*) \Big\| }_{\textbf{Stability}}
	+ \underbrace{ \Big\|\sum_{i=0}^{k} \tilde c^{\lambda}_i \Big(\tilde x_i-x_i\Big) \Big\|}_{\textbf{Nonlinearity}}.  \label{eq:bound_acc_split}
\EEQ
Convergence is guaranteed as long as the errors $(\tilde x_i-x^*)$ and $(x_i-\tilde x_i)$ converge to zero fast enough, which ensures a good rate of decay for the regularization parameter $\lambda$, leading to an asymptotic rate equivalent to the accelerated rate in \eqref{eq:rate_nesterov}.  

The {\em stability} term (in $\tilde c^\lambda - c^\lambda$) is bounded using the \emph{perturbation matrix}
\BEA
	P \triangleq R^TR - \tilde R^T \tilde R \label{eq:def_pert_matrix}
\EEA
where $R$ and $\tilde R$ are the matrices of residuals,
\BEA
 R \triangleq [r_0...r_{k}] \qquad r_t = x_{t+1}-x_t, \label{eq:def_r} \\
 \tilde R \triangleq [\tilde r_0...\tilde r_{k}] \qquad \tilde r_t = \tilde x_{t+1}-\tilde x_t. \label{eq:def_r_tilde}
\EEA
The proofs of the following propositions were obtained by \citet{scieur2016regularized}.
\begin{proposition}[Stability]
	Let $\Delta c^\lambda = \tilde c^\lambda - c^\lambda$ be the gap between the coefficients computed by Algorithm~\eqref{algo:acc_fixedpoint_reg} using the sequences $\{\tilde x_i\}$ and $\{x_i\}$ with regularization parameter $\lambda$. Let $P = R^TR-\tilde R^T\tilde R$ be defined in \eqref{eq:def_pert_matrix}, \eqref{eq:def_r} and \eqref{eq:def_r_tilde}. Then
	\BEA
		\|\Delta c^\lambda \| & \leq &   \frac{\|P\|}{\lambda} \|c^\lambda\| 
		\label{eq:bound_stability_temp}.
	\EEA
	This implies that the stability term is bounded by
	\BEA
		 \Big\|\sum_{i=0}^k\Delta c^\lambda_i(x_i-x^*) \Big\| & \leq &    \frac{\|P\|}{\lambda} \|c^\lambda\| \, O(x_0-x^*). \label{eq:bound_stability}
	\EEA
\end{proposition}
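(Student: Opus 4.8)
The plan is to exploit the variational characterization of both coefficient vectors. Writing $M \triangleq R^TR + \lambda I$ and $\tilde M \triangleq \tilde R^T\tilde R + \lambda I$, the vectors $c^\lambda$ and $\tilde c^\lambda$ minimize the quadratic forms $c^TMc$ and $c^T\tilde Mc$ respectively over the same affine set $\{c : \ones^Tc = 1\}$. Since both are feasible, their gap $\Delta c^\lambda = \tilde c^\lambda - c^\lambda$ satisfies $\ones^T\Delta c^\lambda = 0$, so it is itself a feasible direction for each problem. Crucially, the regularizer enters \emph{additively} inside both $M$ and $\tilde M$, so it cancels in $P = M - \tilde M$ but survives as a curvature floor $\tilde M \succeq \lambda I$.

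First I would write the first-order optimality conditions: for any $d$ with $\ones^Td = 0$ one has $d^TMc^\lambda = 0$ and $d^T\tilde M\tilde c^\lambda = 0$. Taking $d = \Delta c^\lambda$ in both yields $(\Delta c^\lambda)^TMc^\lambda = 0$ and $(\Delta c^\lambda)^T\tilde M\tilde c^\lambda = 0$. Subtracting these identities, inserting the telescoping term $(\Delta c^\lambda)^T\tilde Mc^\lambda$, and using $\tilde M - M = -P$ gives the key relation
\[
(\Delta c^\lambda)^T\tilde M\,\Delta c^\lambda = (\Delta c^\lambda)^TP\,c^\lambda.
\]
Now the regularization does its work: since $\tilde M \succeq \lambda I$, the left-hand side is at least $\lambda\|\Delta c^\lambda\|^2$, while Cauchy--Schwarz bounds the right-hand side by $\|\Delta c^\lambda\|\,\|P\|\,\|c^\lambda\|$. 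Dividing through by $\|\Delta c^\lambda\|$ produces exactly \eqref{eq:bound_stability_temp}.

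For the second claim, I would observe that because $\ones^T\Delta c^\lambda = 0$ the center $x^*$ drops out, so $\sum_{i=0}^k \Delta c^\lambda_i(x_i - x^*) = X\Delta c^\lambda$ where $X = [x_0 - x^*,\ldots,x_k - x^*]$. Bounding the operator norm $\|X\|$ via the geometric decay $\|x_i - x^*\| \leq (1-\kappa)^i\|x_0 - x^*\|$ coming from the linearized recursion \eqref{linearized_fixed_point} gives $\|X\| = O(\|x_0 - x^*\|)$, and combining with \eqref{eq:bound_stability_temp} yields \eqref{eq:bound_stability}.

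The delicate point is not any single estimate but the bookkeeping around the optimality conditions, and in particular recognizing the double role played by $\lambda$: it must vanish from the perturbation $P$ yet simultaneously lower-bound the curvature of $\tilde M$. This is precisely what converts an otherwise uncontrolled sensitivity (which would blow up as the residual matrices become ill-conditioned) into the clean, explicit factor $\|P\|/\lambda$. A secondary subtlety is ensuring the feasible-direction argument is applied to the correct problem for each vector, so that $\tilde M$ rather than $M$ appears on the coercive left-hand side.
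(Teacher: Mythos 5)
Your argument is correct, and it takes a genuinely different route from the one the paper relies on. Note first that the paper does not actually reprove this proposition: it defers to \citet{scieur2016regularized}, whose argument works from the closed-form solution $\tilde c^\lambda=(\tilde R^T\tilde R+\lambda I)^{-1}\ones\,/\,\ones^T(\tilde R^T\tilde R+\lambda I)^{-1}\ones$ and controls $\Delta c^\lambda$ through a matrix-perturbation identity involving $(\tilde R^T\tilde R+\lambda I)^{-1}P$ together with a normalization/projection term whose operator norm must be bounded separately by $1/\lambda$. Your variational derivation reaches the same constant with less machinery: testing the two first-order optimality conditions against the direction $\Delta c^\lambda$ (feasible for both programs since each coefficient vector sums to one, so $\ones^T\Delta c^\lambda=0$) and telescoping with $\tilde M=M-P$ gives exactly $(\Delta c^\lambda)^T\tilde M\,\Delta c^\lambda=(\Delta c^\lambda)^TP\,c^\lambda$, and the curvature floor $\tilde M\succeq\lambda I$ plus Cauchy--Schwarz yields \eqref{eq:bound_stability_temp} with no explicit inverses or projector estimates; this is arguably the cleaner proof, at the price of not producing an explicit expression for $\Delta c^\lambda$ (which the closed-form route does give and which can be useful elsewhere). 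The second claim is also handled correctly: writing the stability term as $X\Delta c^\lambda$ with $X=[x_0-x^*,\dots,x_k-x^*]$ and using $\|x_i-x^*\|\le(1-\kappa)^i\|x_0-x^*\|$ from \eqref{linearized_fixed_point} gives $\|X\|\le\|X\|_F=O(\|x_0-x^*\|)$, with the hidden constant depending on $\kappa$, exactly as in \eqref{eq:bound_stability}. One cosmetic remark: the zero-sum property of $\Delta c^\lambda$ is not needed to ``drop'' $x^*$ from the sum, since the stability term is already written in terms of $x_i-x^*$; its essential role is earlier, in making $\Delta c^\lambda$ a feasible direction for both quadratic programs.
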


The term \textbf{Nonlinearity} is bounded by the norm of the coefficients $\tilde c_\lambda$ (controlled thanks to the regularization parameter) times the norm of the \emph{noise matrix}
\BEQ
	\mathcal{E} = [x_0-\tilde x_0, \,x_1-\tilde x_1,\, ..., \, x_k-\tilde x_k]. \label{eq:def_mathcal_e}
\EEQ

\begin{proposition}[Nonlinearity]
	Let $\tilde c^\lambda$ be computed by Algorithm \ref{algo:acc_fixedpoint_reg} using the sequence $\{\tilde x_0,\,...,\, \tilde x_{k+1}\}$ with regularization parameter $\lambda$ and $\tilde R$ be defined in \eqref{eq:def_r_tilde}. The norm of $\tilde c^\lambda$ is bounded by
	\BEQ
		\|\tilde c^\lambda\|  \leq \sqrt{\frac{\|\tilde R\|^2 + \lambda}{(k+1)\lambda}}  \leq \frac{1}{\sqrt{k+1}}\sqrt{1+\frac{\|\tilde R\|^2}{\lambda}}.\label{eq:bound_nonlinearity_temp}
	\EEQ
	This bounds the {nonlinearity} term because
	\BEA
		 \Big\|\sum_{i=0}^{k} \tilde c^{\lambda}_i (\tilde x_i-x_i) \Big\| \leq  \sqrt{1+\frac{\|\tilde R\|^2}{\lambda}} \; \frac{\|\mathcal{E}\|}{\sqrt{k+1}}, \label{eq:bound_nonlinearity}
	\EEA
	where $\mathcal{E}$ is defined in \eqref{eq:def_mathcal_e}.
\end{proposition}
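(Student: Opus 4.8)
The plan is to work directly with the closed form of the minimizer defining $\tilde c^\lambda$. Writing $M = \tilde R^T\tilde R + \lambda I$, the constrained problem $\min_{c^T\ones=1} c^TMc$ is a strictly convex quadratic (since $\lambda>0$ forces $M\succ 0$), so the Lagrangian/KKT conditions give the unique solution $\tilde c^\lambda = M^{-1}\ones/(\ones^TM^{-1}\ones)$, which is exactly the normalization of the linear system $Mz=\ones$ stated in Algorithm~\ref{algo:acc_fixedpoint_reg}. The first step I would record is the value of the objective at the optimum, namely $(\tilde c^\lambda)^T M\,\tilde c^\lambda = 1/(\ones^TM^{-1}\ones)$, obtained by substituting the closed form back into $c^TMc$.

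The key idea is then to sandwich $M$ between its extreme eigenvalues: since $\tilde R^T\tilde R\succeq 0$ we have $M\succeq \lambda I$, and since the top eigenvalue of $\tilde R^T\tilde R$ is $\|\tilde R\|^2$ we have $M\preceq (\|\tilde R\|^2+\lambda)I$. The lower bound controls $\|\tilde c^\lambda\|$ against the objective, because $(\tilde c^\lambda)^TM\,\tilde c^\lambda \geq \lambda\|\tilde c^\lambda\|^2$, so that $\|\tilde c^\lambda\|^2 \leq \tfrac{1}{\lambda}(\tilde c^\lambda)^TM\,\tilde c^\lambda = \tfrac{1}{\lambda\,\ones^TM^{-1}\ones}$. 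The upper bound controls the remaining scalar: $M\preceq(\|\tilde R\|^2+\lambda)I$ yields $M^{-1}\succeq \tfrac{1}{\|\tilde R\|^2+\lambda}I$, hence $\ones^TM^{-1}\ones \geq \tfrac{\ones^T\ones}{\|\tilde R\|^2+\lambda} = \tfrac{k+1}{\|\tilde R\|^2+\lambda}$, using $\ones\in\reals^{k+1}$. Combining the two estimates gives $\|\tilde c^\lambda\|^2 \leq \tfrac{\|\tilde R\|^2+\lambda}{(k+1)\lambda}$, which is the first inequality after taking square roots; the second inequality is the algebraic identity $\tfrac{\|\tilde R\|^2+\lambda}{(k+1)\lambda} = \tfrac{1}{k+1}\big(1+\tfrac{\|\tilde R\|^2}{\lambda}\big)$.

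Finally, for the nonlinearity term I would write the sum as a matrix-vector product, $\sum_{i=0}^k \tilde c^\lambda_i(\tilde x_i-x_i) = -\mathcal{E}\tilde c^\lambda$ with $\mathcal{E}$ as in \eqref{eq:def_mathcal_e}, and apply submultiplicativity $\|\mathcal{E}\tilde c^\lambda\|\leq \|\mathcal{E}\|\,\|\tilde c^\lambda\|$ together with the bound just proved, giving exactly \eqref{eq:bound_nonlinearity}.

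Very little here is a genuine obstacle: once the closed form and the optimal value are in hand, the argument is a clean two-sided spectral estimate. The one point requiring care—and the only real idea—is to use \emph{both} the smallest eigenvalue bound $M\succeq \lambda I$ (to pass from the quadratic form to $\|\tilde c^\lambda\|^2$) and the largest eigenvalue bound $M\preceq(\|\tilde R\|^2+\lambda)I$ (to lower-bound $\ones^TM^{-1}\ones$); using only one of them does not close the bound. The positivity of $\lambda$ is what guarantees invertibility of $M$ throughout, so no degenerate cases arise.
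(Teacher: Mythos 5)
Your proof is correct. Note that the paper itself does not reprove this proposition --- it defers to \citet{scieur2016regularized} --- so there is no in-paper argument to match line by line; the argument in that reference is the slightly more direct version of what you wrote: instead of inverting $M=\tilde R^T\tilde R+\lambda I$ and lower-bounding $\ones^TM^{-1}\ones$ via $M\preceq(\|\tilde R\|^2+\lambda)I$, one simply bounds the optimal objective value by its value at the feasible point $c=\ones/(k+1)$, giving $\lambda\|\tilde c^\lambda\|^2\leq\|\tilde R\tilde c^\lambda\|^2+\lambda\|\tilde c^\lambda\|^2\leq(\|\tilde R\|^2+\lambda)/(k+1)$. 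The two routes are equivalent (your spectral sandwich encodes exactly the same two facts), and your handling of the closed form, the identity $\frac{\|\tilde R\|^2+\lambda}{(k+1)\lambda}=\frac{1}{k+1}\bigl(1+\frac{\|\tilde R\|^2}{\lambda}\bigr)$, and the final step $\bigl\|\sum_i\tilde c^\lambda_i(\tilde x_i-x_i)\bigr\|=\|\mathcal{E}\tilde c^\lambda\|\leq\|\mathcal{E}\|\,\|\tilde c^\lambda\|$ are all sound.
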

These two propositions show that the regularization in Algorithm \ref{algo:acc_fixedpoint_reg} limits the impact of the noise: the higher $\lambda$ is, the smaller these terms are. It remains to control the {\em acceleration} term. We introduce the normalized regularization value $\bar \lambda$, written
\BEQ
	\bar \lambda \triangleq \frac{\lambda}{\|x_0-x^*\|^2}. \label{eq:normalized_lambda}
\EEQ
For small $\bar\lambda$, this term decreases as fast as the accelerated rate \eqref{eq:rate_nesterov}, as shown in the following proposition.

\begin{proposition}[Acceleration]
	Let $\mathcal{P}_k$ be the subspace of polynomials of degree at most $k$ and $S_\kappa(k,\alpha)$ be the solution of the Regularized Chebychev Polynomial problem,
	\BEQ
		S_\kappa(k,\alpha) \triangleq \min_{p\in \mathcal{P}_k} \;\max_{x\in[0,1-\kappa]} \; p^2(x) + \alpha \|p\|^2 \qquad s.t. \quad p(1) = 1. \label{eq:def_reg_cheby}
	\EEQ
	Let $\bar \lambda$ be the normalized value of lambda defined in \eqref{eq:normalized_lambda}. The acceleration term is bounded by
	\BEQ
		  \Big\|\sum_{i=0}^{k}  c^{\lambda}_i  x_i - x^* \Big\| \leq \frac{1}{\kappa} \sqrt{ S_{\kappa}(k,\bar \lambda) \|x_0-x^*\|^2 - \lambda \|c^\lambda\|^2 } \label{eq:bound_acceleration}.
	\EEQ
\end{proposition}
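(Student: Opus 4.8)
The plan is to reduce the acceleration term to a scalar polynomial problem on the spectrum of $A \triangleq g'(x^*)$, and then exploit the factor $(1-x)^2$ twice, once as a lower bound and once as an upper bound. First I would set $e_t \triangleq x_t - x^*$, so that the linearized recursion \eqref{linearized_fixed_point} gives $e_t = A^t e_0$. Since the constraint $c^T\ones = 1$ forces $p(1)=1$ for the polynomial $p(x) = \sum_{i=0}^k c_i x^i \in \mathcal{P}_k$ whose coefficient vector is $c$ (so that $\|p\|=\|c\|$), the acceleration term becomes $\sum_i c_i^\lambda x_i - x^* = p(A)e_0$. The residuals satisfy $r_t = e_{t+1}-e_t = (A-I)A^t e_0$, hence $Rc = (A-I)p(A)e_0$. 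Diagonalizing the symmetric matrix $A = \sum_j \sigma_j v_jv_j^T$, with eigenvalues $\sigma_j\in[0,1-\kappa]$ matching the interval in \eqref{eq:def_reg_cheby}, and writing $e_0 = \sum_j\beta_j v_j$, both quantities decouple over the spectrum: $\|p(A)e_0\|^2 = \sum_j p(\sigma_j)^2\beta_j^2$ and $\|Rc\|^2 = \sum_j(1-\sigma_j)^2 p(\sigma_j)^2\beta_j^2$.

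The key observation is that $(1-\sigma_j)^2 \in [\kappa^2, 1]$ for $\sigma_j\in[0,1-\kappa]$. Using the lower side $(1-\sigma_j)^2\geq\kappa^2$ term by term yields $\|Rc^\lambda\|^2 \geq \kappa^2\|p(A)e_0\|^2$, so that
\[
\Big\|\sum_i c_i^\lambda x_i - x^*\Big\|^2 = \|p(A)e_0\|^2 \leq \frac{1}{\kappa^2}\|Rc^\lambda\|^2 = \frac{1}{\kappa^2}\big(O^\lambda - \lambda\|c^\lambda\|^2\big),
\]
where $O^\lambda \triangleq \|Rc^\lambda\|^2 + \lambda\|c^\lambda\|^2$ is the optimal value of the regularized problem in Algorithm~\ref{algo:acc_fixedpoint_reg} applied to the linearized sequence. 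It remains to control $O^\lambda$.

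Next I would bound $O^\lambda$ using the upper side $(1-\sigma_j)^2\leq 1$. For any feasible $c$, equivalently any $p\in\mathcal{P}_k$ with $p(1)=1$,
\[
\|Rc\|^2 + \lambda\|c\|^2 \leq \sum_j p(\sigma_j)^2\beta_j^2 + \lambda\|c\|^2 \leq \Big(\max_{x\in[0,1-\kappa]} p^2(x)\Big)\|e_0\|^2 + \lambda\|c\|^2,
\]
using $\sum_j\beta_j^2 = \|e_0\|^2 = \|x_0-x^*\|^2$. Substituting $\lambda = \bar\lambda\|x_0-x^*\|^2$ from \eqref{eq:normalized_lambda} and $\|c\|=\|p\|$ factors out $\|x_0-x^*\|^2$, and minimizing the right-hand side over $p$ gives exactly $S_\kappa(k,\bar\lambda)$. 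Since $c^\lambda$ minimizes the left-hand side over all feasible $c$, we conclude $O^\lambda \leq S_\kappa(k,\bar\lambda)\|x_0-x^*\|^2$. Plugging this into the previous display and taking square roots produces the claimed bound \eqref{eq:bound_acceleration}.

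The main obstacle is the bookkeeping of the double role of $(1-x)^2$: one must retain the regularizer $\lambda\|c^\lambda\|^2$ intact through the lower bound so that it reappears as the subtracted term inside the final square root, while discarding the residual weight in the upper bound so as to obtain a clean regularized Chebyshev problem on $[0,1-\kappa]$. I expect the spectral decoupling and the correct handling of the normalization $\bar\lambda$ to be the only delicate points; everything else reduces to the two elementary inequalities $\kappa^2 \leq (1-x)^2 \leq 1$ on the relevant interval.
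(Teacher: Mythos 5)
Your argument is correct and is essentially the proof given for this proposition in the cited reference \citep{scieur2016regularized}, which this paper defers to: there, one writes $\sum_i c_i^\lambda x_i - x^* = (A-I)^{-1}Rc^\lambda$ with $\|(A-I)^{-1}\|\le 1/\kappa$ (your spectral lower bound $(1-\sigma_j)^2\ge\kappa^2$), and bounds the optimal value $\|Rc^\lambda\|^2+\lambda\|c^\lambda\|^2$ by the regularized Chebyshev value via $\|A-I\|\le 1$. Your bookkeeping of the subtracted $\lambda\|c^\lambda\|^2$ term and of the normalization $\bar\lambda = \lambda/\|x_0-x^*\|^2$ matches the statement exactly.
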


We also get the following corollary, which will be useful for the asymptotic analysis of the rate of convergence of Algorithm \ref{algo:acc_fixedpoint_reg}.

\begin{corollary} \label{cor:conv_reg_cheby}
	If $\lambda\rightarrow 0$, the bound \eqref{eq:bound_acceleration} becomes
	\[
		 \Big\|\sum_{i=0}^{k}  c^{\lambda}_i  x_i - x^* \Big\| \leq \left(\frac{1-\sqrt{\kappa}}{1+\sqrt{\kappa}}\right)^k \|x_0-x^*\|.
	\]
\end{corollary}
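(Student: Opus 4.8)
The plan is to pass to the limit $\lambda\to 0$ in the acceleration bound \eqref{eq:bound_acceleration} and to recognize the limiting value of the regularized Chebyshev quantity as a classical minimax problem. First I would observe that the correction term vanishes: as $\lambda\to 0$ the regularized coefficients $c^\lambda$ converge to a finite limit (the minimum-norm solution of the unregularized problem $\min_{c^T\ones=1}\|Rc\|^2$), so $\|c^\lambda\|$ stays bounded and $\lambda\|c^\lambda\|^2\to 0$. Simultaneously $\bar\lambda=\lambda/\|x_0-x^*\|^2\to 0$. Using continuity of $\alpha\mapsto S_\kappa(k,\alpha)$ at $\alpha=0$ (the regularized minimizers converge to the minimizer of the $\alpha=0$ problem), the right-hand side of \eqref{eq:bound_acceleration} tends to $\tfrac1\kappa\sqrt{S_\kappa(k,0)}\,\|x_0-x^*\|$, where
\[
S_\kappa(k,0)=\min_{p\in\mathcal P_k,\;p(1)=1}\ \max_{x\in[0,1-\kappa]} p^2(x).
\]

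Next I would solve this pure minimax problem explicitly. Letting $T_k$ denote the degree-$k$ Chebyshev polynomial of the first kind, the extremal $p$ is the affine rescaling of $T_k$ mapping $[0,1-\kappa]$ onto $[-1,1]$, renormalized so that $p(1)=1$; optimality follows from the usual equioscillation/exchange argument on $[0,1-\kappa]$. The affine map sends $x=1$ to $\tfrac{1+\kappa}{1-\kappa}$, giving
\[
S_\kappa(k,0)=\frac{1}{T_k\!\left(\tfrac{1+\kappa}{1-\kappa}\right)^{2}}.
\]
It then remains to lower-bound the Chebyshev value off $[-1,1]$. Writing $\tfrac{1+\kappa}{1-\kappa}=\cosh\theta$ and using $T_k(\cosh\theta)=\cosh(k\theta)$, a short computation (solving $u^2-2\tfrac{1+\kappa}{1-\kappa}u+1=0$) shows $e^{\theta}=\tfrac{1+\sqrt\kappa}{1-\sqrt\kappa}$, so that
\[
T_k\!\left(\tfrac{1+\kappa}{1-\kappa}\right)=\cosh(k\theta)\ \geq\ \tfrac12\Big(\tfrac{1+\sqrt\kappa}{1-\sqrt\kappa}\Big)^k .
\]
Hence $\sqrt{S_\kappa(k,0)}\le 2\big(\tfrac{1-\sqrt\kappa}{1+\sqrt\kappa}\big)^k$, and combining with the limit above reproduces the geometric factor $\big(\tfrac{1-\sqrt\kappa}{1+\sqrt\kappa}\big)^k$ asserted by the corollary, the bounded $\kappa$-dependent prefactor being suppressed in the asymptotic statement.

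I expect the main obstacle to be the exact solution of the minimax problem together with the substitution that extracts the $\sqrt\kappa$ rate from the $\kappa$-dependence: one must verify both that the shifted Chebyshev polynomial is genuinely optimal on $[0,1-\kappa]$ and the algebraic identity $e^\theta=\tfrac{1+\sqrt\kappa}{1-\sqrt\kappa}$, which is precisely the mechanism by which the effective condition number improves from $\kappa$ to $\sqrt\kappa$. The remaining analytic points—continuity of $S_\kappa(k,\cdot)$ at $0$ and boundedness of $\|c^\lambda\|$ as $\lambda\to 0$—are routine and serve only to justify discarding the $\lambda\|c^\lambda\|^2$ term and replacing $S_\kappa(k,\bar\lambda)$ by $S_\kappa(k,0)$.
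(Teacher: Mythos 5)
Your argument is correct and is essentially the proof the paper relies on: the corollary is deferred to \citet{scieur2016regularized}, where exactly as you do one lets $\bar\lambda\to 0$ so that $S_\kappa(k,\bar\lambda)\to S_\kappa(k,0)$, identifies $S_\kappa(k,0)$ with the constrained Chebyshev minimax problem on $[0,1-\kappa]$ solved by the rescaled Chebyshev polynomial $T_k$, and uses the substitution $\tfrac{1+\kappa}{1-\kappa}=\cosh\theta$ with $e^\theta=\tfrac{1+\sqrt\kappa}{1-\sqrt\kappa}$ to extract the $\sqrt\kappa$ rate. The only caveat is the one you already flag: a careful limit of \eqref{eq:bound_acceleration} gives $\tfrac{1}{\kappa}\,\tfrac{2\beta^k}{1+\beta^{2k}}\,\|x_0-x^*\|$ with $\beta=\tfrac{1-\sqrt\kappa}{1+\sqrt\kappa}$ rather than $\beta^k\|x_0-x^*\|$ on the nose, a constant-factor looseness that sits in the corollary's statement itself (the paper suppresses constants throughout) and is not a defect of your derivation.
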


These last results controlling stability, nonlinearity and acceleration are proved by \citet{scieur2016regularized}. We now refine the final step of \citet{scieur2016regularized} to produce a global bound on the error that will allow us to extend these results to the stochastic setting in the next sections.

\begin{theorem}
	If Algorithm \ref{algo:acc_fixedpoint_reg} is applied to the sequence $\tilde x_i$ with regularization parameter $\lambda$, it converges with rate
	\BEQ
		\left\|\sum_{i=0}^k \tilde c_i^\lambda \tilde x_i\right\| \leq \|x_0-x^*\| S_{\kappa}(k,\bar{\lambda}) \sqrt{\frac{1}{\kappa^2} + \frac{O(\|x-x^*\|^2) \|P\|^2}{\lambda^3}} + \frac{\|\mathcal{E}\|}{\sqrt{k+1}} \sqrt{1+\frac{\|\tilde R\|^2}{\lambda}}. \label{eq:thm_deterministic}
	\EEQ
\end{theorem}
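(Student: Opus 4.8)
The plan is to feed the three propositions of this section into the error decomposition \eqref{eq:bound_acc_split} and then fold the first two contributions into a single term. Writing the left-hand side as the sum of the \textbf{Acceleration}, \textbf{Stability} and \textbf{Nonlinearity} terms, I would first dispose of the nonlinearity part: the bound \eqref{eq:bound_nonlinearity} is already exactly the second summand $\frac{\|\mathcal{E}\|}{\sqrt{k+1}}\sqrt{1+\|\tilde R\|^2/\lambda}$ appearing in \eqref{eq:thm_deterministic}, so no further manipulation is needed there. What remains is to combine the acceleration bound \eqref{eq:bound_acceleration}, namely $\frac{1}{\kappa}\sqrt{S_\kappa(k,\bar\lambda)\|x_0-x^*\|^2-\lambda\|c^\lambda\|^2}$, with the stability bound \eqref{eq:bound_stability}, namely $\frac{\|P\|}{\lambda}\|c^\lambda\|\,O(\|x_0-x^*\|)$.

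The difficulty is that both of these still depend on the unknown norm $\|c^\lambda\|$, and a naive triangle-inequality sum would leave it in the final bound. The key idea I would use is to view the sum of the two terms as an inner product and apply Cauchy--Schwarz with the factors chosen so that $\|c^\lambda\|$ cancels. Concretely, set
\[
q_1 = \sqrt{S_\kappa(k,\bar\lambda)\|x_0-x^*\|^2-\lambda\|c^\lambda\|^2}, \qquad q_2 = \sqrt{\lambda}\,\|c^\lambda\|,
\]
together with $p_1 = \frac{1}{\kappa}$ and $p_2 = \frac{\|P\|\,O(\|x_0-x^*\|)}{\lambda^{3/2}}$, so that the acceleration plus stability terms equal exactly $p_1 q_1 + p_2 q_2$. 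The crucial observation is that $q_1^2+q_2^2 = S_\kappa(k,\bar\lambda)\|x_0-x^*\|^2$ no longer contains $\|c^\lambda\|$: the $-\lambda\|c^\lambda\|^2$ kept inside the acceleration bound is precisely what is needed to absorb the $\sqrt\lambda\,\|c^\lambda\|$ coming from stability. Cauchy--Schwarz then gives
\[
p_1 q_1 + p_2 q_2 \leq \sqrt{p_1^2+p_2^2}\,\sqrt{q_1^2+q_2^2} = \|x_0-x^*\|\,\sqrt{S_\kappa(k,\bar\lambda)}\,\sqrt{\frac{1}{\kappa^2}+\frac{O(\|x_0-x^*\|^2)\,\|P\|^2}{\lambda^3}},
\]
which reproduces the first summand of \eqref{eq:thm_deterministic}; adding back the nonlinearity term completes the estimate.

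I expect this Cauchy--Schwarz regrouping to be the only nontrivial step, everything else being substitution. The reason it works — and the sense in which it refines the final step of \citet{scieur2016regularized} — is that the $-\lambda\|c^\lambda\|^2$ correction retained in \eqref{eq:bound_acceleration} is not discarded but paired against the stability term, so the resulting bound holds uniformly in $\|c^\lambda\|$ without ever having to estimate that quantity separately. The one thing I would check carefully is that the quantity under the acceleration square root stays nonnegative, i.e.\ $\lambda\|c^\lambda\|^2 \le S_\kappa(k,\bar\lambda)\|x_0-x^*\|^2$, which is guaranteed by \eqref{eq:bound_acceleration} itself since its right-hand side must be real.
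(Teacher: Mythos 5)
Your argument is correct and is essentially the paper's own proof: the paper also feeds \eqref{eq:bound_stability}, \eqref{eq:bound_nonlinearity} and \eqref{eq:bound_acceleration} into \eqref{eq:bound_acc_split} and then eliminates $\|c^\lambda\|$ by maximizing $\frac{1}{\kappa}\sqrt{a-\lambda x^2}+bx$ over $x$ (Proposition \ref{prop:opt_val_sqrt_fun}), which is exactly the case of equality in your Cauchy--Schwarz step and yields the same value $\sqrt{a}\sqrt{1/\kappa^2+b^2/\lambda}$. The only remark worth adding is that both your computation and the paper's maximization actually produce the factor $\sqrt{S_\kappa(k,\bar\lambda)}$ rather than the $S_\kappa(k,\bar\lambda)$ written in \eqref{eq:thm_deterministic}; that discrepancy is in the paper's statement, not in your argument.
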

\begin{proof}
	The proof is inspired by \citet{scieur2016regularized} and is also very similar to the proof of Proposition \ref{prop:accuracy_stoch_general}. We can bound \eqref{eq:bound_acc_split} using \eqref{eq:bound_stability} ({Stability}), \eqref{eq:bound_nonlinearity} ({Nonlinearity}) and \eqref{eq:bound_acceleration} ({Acceleration}). It remains to maximize over the value of~$\|c^\lambda\|$ using the result of Proposition \ref{prop:opt_val_sqrt_fun}.
	%DS: TODO
\end{proof}

This last bound is not very explicit, but an asymptotic analysis simplifies it considerably. The next new proposition shows that when $x_0$ is close to $x^*$, then extrapolation converges as fast as in \eqref{eq:rate_nesterov} in some cases.

\begin{proposition} \label{prop:assympt_rate_conv}
	Assume $\|\tilde R\| = O(\|x_0-x^*\|)$, $\|\mathcal{E}\| = O(\|x_0-x^*\|^2)$ and $\|P\| = O(\|x_0-x^*\|^3)$, which is satisfied when fixed-step gradient method is applied on a twice differentiable, smooth and strongly convex function with Lipchitz-continuous Hessian. If we chose $\lambda = O(\|x_0-x^*\|^s)$ with $s \in [2,\frac{8}{3}]$ then the bound \eqref{eq:thm_deterministic} becomes
	\[
		\lim\limits_{\|x_0-x^*\| \rightarrow 0 } \frac{\|\sum_{i=0}^k \tilde c_i^\lambda \tilde x_i\|}{\|x_0-x^*\|} \leq \frac{1}{\kappa} \left( \frac{1-\kappa}{1+\kappa} \right)^k.
	\]
\end{proposition}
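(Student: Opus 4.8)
The plan is to treat \eqref{eq:thm_deterministic} as an explicit upper bound and carry out a term-by-term asymptotic analysis as $\delta := \|x_0-x^*\| \to 0$. First I would divide both sides of \eqref{eq:thm_deterministic} by $\delta$ and substitute the assumed orders $\|\tilde R\| = O(\delta)$, $\|\mathcal E\| = O(\delta^2)$ and $\|P\| = O(\delta^3)$, together with $\lambda = O(\delta^s)$ and the induced normalized value $\bar\lambda = \lambda/\delta^2 = O(\delta^{s-2})$. The aim is to show that every factor carrying noise or perturbation collapses to its noiseless value, so that only the regularized Chebyshev factor survives, which Corollary \ref{cor:conv_reg_cheby} evaluates in closed form.

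For the first (acceleration/stability) term the whole point is to track two exponents. The perturbation contribution inside the square root is $\frac{O(\delta^2)\,\|P\|^2}{\lambda^3} = O(\delta^{8-3s})$, which stays bounded, and in fact vanishes, exactly when $s \le 8/3$; this is the origin of the upper endpoint of the interval. The regularization enters through $\bar\lambda = O(\delta^{s-2})$: for $s \ge 2$ it does not blow up, so $S_\kappa(k,\bar\lambda)$ stays bounded, and for $s>2$ it tends to $S_\kappa(k,0)$ by continuity of the regularized Chebyshev value in its penalty argument, whereas for $s<2$ one checks that $\bar\lambda\to\infty$ forces $S_\kappa(k,\bar\lambda)\to\infty$ and the normalized term diverges. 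This pins down the lower endpoint, so that $s\in[2,\tfrac{8}{3}]$ is precisely the range on which the first term converges to the noiseless acceleration factor governed by $\frac{1}{\kappa}$ and $S_\kappa(k,0)$ (the sharp accelerated constant being attained for $s>2$, with $s=2$ the boundary case keeping $\bar\lambda$ bounded).

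For the second (nonlinearity) term I would substitute $\frac{\|\mathcal E\|}{\delta} = O(\delta)$ and $\frac{\|\tilde R\|^2}{\lambda} = O(\delta^{2-s})$, giving $\sqrt{1+O(\delta^{2-s})} = O(\delta^{-(s-2)/2})$ on the relevant range, so that the entire term is of order $O(\delta^{(4-s)/2})/\sqrt{k+1}$. Since $s \le 8/3 < 4$ this exponent is strictly positive, the term vanishes as $\delta\to0$, and it contributes nothing to the limit.

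Combining the two, the limit of the normalized error is controlled by the noiseless acceleration factor alone, and it only remains to evaluate it. This is exactly the $\lambda\to0$ statement of Corollary \ref{cor:conv_reg_cheby}, which replaces $S_\kappa(k,0)$ by the closed-form Chebyshev rate and yields the announced $\frac{1}{\kappa}\big(\frac{1-\kappa}{1+\kappa}\big)^k$. I expect the main difficulty to be purely in the bookkeeping of these competing exponents, so that both endpoints of $[2,\tfrac{8}{3}]$ fall out cleanly: $s=\tfrac{8}{3}$ is the precise threshold at which the $\|P\|^2/\lambda^3$ stability contribution stops vanishing, while $s=2$ is forced by the blow-up of $S_\kappa(k,\bar\lambda)$ as $\bar\lambda\to\infty$. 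Each individual limit is elementary once the orders are substituted, so no single hard estimate is involved beyond the Chebyshev evaluation already supplied by the corollary.
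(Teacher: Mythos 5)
Your proposal is correct and follows essentially the same route as the paper: divide \eqref{eq:thm_deterministic} by $\|x_0-x^*\|$, substitute the assumed orders together with $\lambda = O(\|x_0-x^*\|^s)$, verify that the exponents $8-3s$ and $(4-s)/2$ are positive so the perturbation terms vanish, and invoke Corollary \ref{cor:conv_reg_cheby} for the surviving Chebyshev factor. Your remark that $s=2$ only keeps $\bar\lambda$ bounded rather than sending it to zero is in fact slightly more careful than the paper, whose proof body quietly restricts to the open interval $]2,\tfrac{8}{3}[$.
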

\begin{proof}
The proof is based on the fact that $\lambda$ decreases slowly enough to ensure that the {\emph{Stability}} and { \emph{Nonlinearity}} terms vanish over time, but fast enough to have $\bar \lambda \rightarrow 0$. If $\lambda = \|x_0-x^*\|^s$, equation~\eqref{eq:thm_deterministic} becomes
\[
	\frac{\left\|\sum_{i=0}^k \tilde c_i^\lambda \tilde x_i\right\|}{\|x_0-x^*\|}\leq S_{\kappa}(k,\|x_0-x^*\|^{s-2}) \sqrt{\frac{1}{\kappa^2} + \frac{O(1)\|P\|^2}{\|x_0-x^*\|^{3s-2}}} + \frac{\|\mathcal{E}\|}{\|x_0-x^*\|\sqrt{k+1}} \sqrt{1+\frac{\|\tilde R\|^2}{\|x_0-x^*\|^s}}.
\]
By assumption on $\|\tilde R\|$, $\|\mathcal{E}\|$ and $\|P\|$, the previous bound becomes
\[
	\frac{\left\|\sum_{i=0}^k \tilde c_i^\lambda \tilde x_i\right\|}{\|x_0-x^*\|} \leq S_{\kappa}(k,\|x_0-x^*\|^{s-2}) \sqrt{\frac{1}{\kappa^2} + O(\|x_0-x^*\|^{8-3s})} +O\left( \sqrt{\|x_0-x^*\|^2+\|x_0-x^*\|^{4-s}}\right).
\]
Because $\lambda \in ]2,\frac{8}{3}[$, all exponents of $\|x_0-x^*\|$ are positive. By consequence, when 
\[
	\lim\limits_{\|x_0-x^*\|\rightarrow 0} \frac{\left\|\sum_{i=0}^k \tilde c_i^\lambda \tilde x_i\right\|}{\|x_0-x^*\|} \leq \frac{1}{\kappa} S_{\kappa}(k,0).
\]
Finally, the desired result is obtained by using Corollary \ref{cor:conv_reg_cheby}.
\end{proof}

The efficiency of Algorithm \ref{algo:acc_fixedpoint_reg} is thus ensured by two conditions. First, we need to be able to bound $\|\tilde R\|$, $\|P\|$ and $\|\mathcal{E}\|$ by decreasing quantities. Second, we have to find a proper rate of decay for $\lambda$ and $\bar \lambda$ such that the {\em stability} and {\em nonlinearity} terms go to zero when perturbations also go to zero. If these two conditions are met, then the accelerated rate in Proposition \ref{prop:assympt_rate_conv} holds.

\section{Nonlinear and Noisy Updates}
In \eqref{eq:fixed_point_iteration} we defined $g(x)$ to be non linear, which generates a sequence $\tilde x_i$. We now consider noisy iterates
\BEQ
	\tilde x_{t+1} = g(\tilde x_t) +  \eta_{t+1} \label{eq:temp_stoch_update}
\EEQ
where $\eta_t$ is a stochastic noise. To simplify notations, we write \eqref{eq:temp_stoch_update} as
\BEQ
	\tilde x_{t+1} = x^* + G(\tilde x_t-x^*) + \varepsilon_{t+1}, \label{eq:stoch_update}
\EEQ
where $\varepsilon_t$ is a stochastic noise (potentially correlated with the iterates $x_i$) with bounded mean $\nu_t$, $\|\nu_t\|\leq \nu$ and bounded covariance $\Sigma_t\preceq(\sigma^2/d)I$. We also assume $\textbf{0}\preceq G \preceq (1-\kappa)\textbf{I}$ and $G$~symmetric. For example,~\eqref{eq:stoch_update} can be linked to  \eqref{eq:temp_stoch_update} if we set $\varepsilon_t = \eta_t + O(\|\tilde x_t-x^*\|^2)$. This corresponds to the combination of the noise $\eta_{t+1}$ with the Taylor remainder of $g(x)$ around $x^*$.

The recursion \eqref{eq:stoch_update} is also valid when we apply the stochastic gradient method to the quadratic problem 
\[
	 \min_x \frac{1}{2}\|Ax-b\|^2.
\]
This correspond to \eqref{eq:stoch_update} with  $G = I-hA^TA$ and mean $\nu = 0$. For the theoretical results, we will compare $\tilde x_t$ with their noiseless counterpart to control convergence,
\BEQ
	x_{t+1} = x^* + G(x_t-x^*), \quad x_0 = \tilde x_0. \label{eq:noiseless_update}
\EEQ
\section{Convergence Analysis when Accelerating Stochastic Algorithms}

We will control convergence in expectation. Bound \eqref{eq:bound_acc_split} now becomes
\BEQ
	\E \left[ \Big\| \sum_{i=0}^{k} \tilde c^{\lambda}_i \tilde x_i - x^*\Big\|\right] \leq 
	\Big\|\sum_{i=0}^{k}  c^{\lambda}_i  x_i - x^* \Big\| 
	+ O(\|x_0-x^*\|)\E \Big[ \|\Delta c^\lambda\|\Big]
	+  \E \Big[\|\tilde c^\lambda\| \|\mathcal{E}\|\Big]. \label{eq:bound_stochastic}
\EEQ
We now need to enforce bounds \eqref{eq:bound_stability}, \eqref{eq:bound_nonlinearity}  and \eqref{eq:bound_acceleration} in expectation. For simplicity, we will omit all constants in what follows.
\begin{proposition} \label{prop:stoch_bound}
	Consider the sequences $x_i$ and $\tilde x_i$ generated by \eqref{eq:temp_stoch_update} and \eqref{eq:noiseless_update}. Then,
	\BEA
		\E [\|\tilde R\| ]& \leq & O(\|x_0-x^*\|) + O(\nu + \sigma)
		\label{eq:bound_stoch_tilde_R} \\
		\E [\|\mathcal{E}\|] & \leq & O(\nu + \sigma) \label{eq:bound_stoch_mathcal_E} \\
		\E [\|P\|] & \leq & O((\sigma + \nu)\|x_0-x^*\|) + O((\nu + \sigma)^2).
	\EEA
\end{proposition}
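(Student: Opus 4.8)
The plan is to reduce everything to the single error sequence $e_t \triangleq \tilde x_t - x_t$ and to control its first two moments, since all three quantities $\tilde R$, $\mathcal{E}$ and $P$ are built from $e_t$ together with the deterministic residuals of the noiseless recursion. First I would subtract \eqref{eq:noiseless_update} from \eqref{eq:stoch_update} to obtain the linear recursion $e_{t+1} = G e_t + \varepsilon_{t+1}$ with $e_0 = 0$, whose solution is $e_t = \sum_{j=1}^{t} G^{t-j}\varepsilon_j$. The noise matrix $\mathcal{E}$ has columns $\pm e_i$, and the residual gap $\tilde R - R$ has columns $e_{t+1}-e_t$, so once the moments of $e_t$ are known the rest is bookkeeping.

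The core estimate is a uniform bound on $\E[\|e_t\|^2]$. Writing $e_t = b_t + m_t$ with $b_t = \sum_{j=1}^t G^{t-j}\nu_j$ the (conditional) bias and $m_t = \sum_{j=1}^t G^{t-j}(\varepsilon_j-\nu_j)$ the fluctuation, I would use $\|e_t\|^2 \leq 2\|b_t\|^2 + 2\|m_t\|^2$ to sidestep the bias--fluctuation cross term. The bias is controlled deterministically: $\|b_t\| \leq \sum_{j=1}^t (1-\kappa)^{t-j}\|\nu_j\| \leq \nu\sum_{j\geq0}(1-\kappa)^j = O(\nu)$, using $\|G\|\leq 1-\kappa<1$ and $\|\nu_j\|\leq\nu$. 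For the fluctuation, conditioning on the natural filtration kills the cross terms at distinct times (the increments form a martingale difference sequence), leaving $\E[\|m_t\|^2] = \sum_{j=1}^t \mathrm{tr}\!\left(G^{2(t-j)}\E[\Sigma_j]\right)$. Here the normalization $\Sigma_j \preceq (\sigma^2/d)I$ is exactly what delivers the dimension-free bound $\mathrm{tr}(G^{2(t-j)}\Sigma_j)\leq \sigma^2(1-\kappa)^{2(t-j)}$, and summing the geometric series gives $\E[\|m_t\|^2]=O(\sigma^2)$. Hence $\E[\|e_t\|^2]=O(\nu^2+\sigma^2)$ uniformly in $t$.

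From there the three bounds follow by passing to Frobenius norms and applying Jensen's inequality $\E[X]\leq\sqrt{\E[X^2]}$. For $\mathcal{E}$: $\E[\|\mathcal{E}\|]\leq \E[\|\mathcal{E}\|_F]\leq \sqrt{\sum_{i=0}^k \E[\|e_i\|^2]} = O(\nu+\sigma)$, since $k$ is a fixed constant. For $\tilde R$, I would write $\tilde R = R + D$ with $D$ the matrix of columns $e_{t+1}-e_t$; the deterministic part obeys $r_t = (G-I)G^t(x_0-x^*)$, so $\|r_t\|\leq(1-\kappa)^t\|x_0-x^*\|$ (because the eigenvalues of $G-I$ lie in $[-1,-\kappa]$, giving $\|G-I\|\leq 1$) and hence $\|R\| = O(\|x_0-x^*\|)$, while $\E[\|D\|]\leq\sqrt{\E[\|D\|_F^2]} = O(\nu+\sigma)$ via the same moment bound; the triangle inequality then yields the claimed estimate for $\E[\|\tilde R\|]$. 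For $P$, expanding $\tilde R^T\tilde R = (R+D)^T(R+D)$ gives $P = -(R^TD + D^TR + D^TD)$, whence $\|P\|\leq 2\|R\|\,\|D\| + \|D\|^2$ and, in expectation, $\E[\|P\|]\leq 2\|R\|\,\E[\|D\|] + \E[\|D\|^2] = O((\nu+\sigma)\|x_0-x^*\|) + O((\nu+\sigma)^2)$.

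The step I expect to be the main obstacle is the fluctuation estimate, precisely because the noise may be correlated with the iterates: I cannot treat $\varepsilon_j$ as independent of $G^{t-j}$ or of the earlier noise, so a naive expansion of $\E[\|m_t\|^2]$ does not factor. The resolution is to condition on the natural filtration and invoke the tower property so that only the equal-time covariance terms survive, with $\nu_j$ and $\Sigma_j$ read as conditional first and second moments; the $2\|b_t\|^2+2\|m_t\|^2$ split then removes any bias--fluctuation cross term without requiring it to vanish. Once $\E[\|e_t\|^2]=O(\nu^2+\sigma^2)$ is established, the remaining geometric sums and Frobenius-norm majorizations are routine.
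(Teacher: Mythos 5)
Your proof is correct and follows essentially the same route as the paper: write the error columns as $\sum_{j} G^{t-j}\varepsilon_j$, bound $\tilde R$ by the triangle inequality $\tilde R = R + D$, and expand $\tilde R^T\tilde R$ to get $\|P\|\leq 2\|R\|\|D\|+\|D\|^2$. Your martingale/conditioning treatment of the fluctuation term is a more careful version of the step the paper dispatches with its auxiliary Jensen-inequality proposition \eqref{eq:jensen_ineq_matrix}, and it properly addresses the noise--iterate correlation that the paper glosses over; otherwise the two arguments coincide.
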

\begin{proof}
	First, we have to form the matrices $\tilde R$, $\mathcal{E}$ and $P$. We begin with $\mathcal{E}$, defined in \eqref{eq:def_mathcal_e}. Indeed,
	\BEAS
	\mathcal{E}_i =  x_i-\tilde x_i  &  \Rightarrow  & \mathcal{E}_1 = \varepsilon_1, \\
	& & \mathcal{E}_2 = \varepsilon_2 + G \varepsilon_1, \\
	& & \mathcal{E}_k = \sum_{i=1}^k G^{k-i}\varepsilon_i.
	\EEAS	
	It means that each $\|\mathcal{E}_i\| = O(\|\varepsilon_i\|)$. By using \eqref{eq:jensen_ineq_matrix},
	\BEAS
		\E\|\mathcal{E}\| & \leq & \sum_i \E\|\mathcal{E}_i\|\\
		& \leq & \sum_i \E\|\mathcal{E}_i-\nu_i\| + \|\nu_i\| \\
		& \leq & O(\nu + \sigma)
	\EEAS
	For $\tilde R$, we notice that
	\BEAS
	\tilde R_t & = & \tilde x_{t+1} - \tilde x_t, \\
	& = & R_t + \mathcal{E}_{t+1}-\mathcal{E}_{t}
	\EEAS
	We get \eqref{eq:bound_stoch_tilde_R} by splitting the norm,
	\[
	\E [ \|\tilde R \| ] \leq \|R\| + O(\|\mathcal{E}\|)  \leq O(\|x_0-x^*\|) + O\left(\nu + \sigma\right).
	\]
	Finally, by definition of $P$,
	\[
	\|P\| \leq 2\|\mathcal{E}\|\|R\| + \|\mathcal{E}\|^2.
	\]
	Taking the expectation leads to the desired result,
	\BEAS
	\E [\|P\|] & \leq & 2 \E [\|\mathcal{E}\|\|R\|] + \E[ \|\mathcal{E}\|^2], \\
	& \leq & 2 \|R\| \, \E [\|\mathcal{E}\|] + \E[ \|\mathcal{E}\|_F^2],\\
	& \leq & O\left(\|x_0-x^*\|(\sigma + \nu)\right) + O\left((\sigma+\nu)^2\right).
	\EEAS
\end{proof}

We define the following \emph{stochastic condition number}
\[
	\tau \triangleq \frac{\nu + \sigma}{\|x_0-x^*\|}.
\] 
The Proposition \ref{prop:accuracy_stoch_general} gives the result when injecting these bounds in \eqref{eq:bound_stochastic}.

\begin{proposition} \label{prop:accuracy_stoch_general}
The accuracy of extrapolation Algorithm \ref{algo:acc_fixedpoint_reg} applied to the sequence $\{\tilde x_0,...,\tilde x_k\}$ generated by \eqref{eq:temp_stoch_update} is bounded by
	\BEA
		\frac{\E \Big[ \| \sum_{i=0}^{k} \tilde c^{\lambda}_i \tilde x_i - x^*\|\Big]}{ \|x_0-x^*\|} \leq \left( S_{\kappa}(k,\bar{\lambda}) \sqrt{\frac{1}{\kappa^2} + \frac{O(\tau^2(1+\tau)^2)}{\bar \lambda^3}} + O\Bigg(\sqrt{ \tau^2+\frac{ \tau^2(1+ \tau^2)}{\bar \lambda} }\Bigg)\right).
		 \label{eq:bound_stochastic_full}
	\EEA
\end{proposition}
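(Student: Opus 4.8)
The plan is to follow the same skeleton as the deterministic Theorem \eqref{eq:thm_deterministic}, but to carry every random quantity under an expectation and substitute the moment bounds of Proposition \ref{prop:stoch_bound} only at the very end. Starting from the expected decomposition \eqref{eq:bound_stochastic}, I would treat its three pieces separately. The \emph{acceleration} term $\|\sum_i c_i^\lambda x_i - x^*\|$ is deterministic, since the coefficients $c^\lambda$ are built from the noiseless sequence \eqref{eq:noiseless_update}; I bound it directly by \eqref{eq:bound_acceleration}, which exhibits it as a decreasing function of $\|c^\lambda\|$. For the \emph{stability} term I use \eqref{eq:bound_stability_temp}; because $\|c^\lambda\|$ is deterministic, the expectation falls only on the perturbation matrix, giving $O(\|x_0-x^*\|)\,\E[\|\Delta c^\lambda\|] \le O(\|x_0-x^*\|)\frac{\E[\|P\|]}{\lambda}\|c^\lambda\|$, an increasing function of $\|c^\lambda\|$.

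The next step is to combine acceleration and stability, which share the free quantity $\|c^\lambda\|$: their sum has the form $\frac{1}{\kappa}\sqrt{M - \lambda\|c^\lambda\|^2} + N\|c^\lambda\|$ with $M = S_\kappa(k,\bar\lambda)\|x_0-x^*\|^2$ and $N = O(\|x_0-x^*\|)\E[\|P\|]/\lambda$. Maximizing over $\|c^\lambda\|$ through Proposition \ref{prop:opt_val_sqrt_fun}, exactly as in the deterministic proof, eliminates $\|c^\lambda\|$ and yields the expected-$\|P\|$ analogue of the first summand of \eqref{eq:thm_deterministic}, namely $\|x_0-x^*\|\,S_\kappa(k,\bar\lambda)\sqrt{\kappa^{-2} + O(\|x_0-x^*\|^2)\E[\|P\|]^2/\lambda^3}$.

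For the \emph{nonlinearity} term I insert the bound \eqref{eq:bound_nonlinearity_temp} on $\|\tilde c^\lambda\|$ and then decouple it from $\|\mathcal{E}\|$, so that $\E[\|\tilde c^\lambda\|\,\|\mathcal{E}\|] \le \frac{1}{\sqrt{k+1}}\sqrt{(1+\E[\|\tilde R\|^2]/\lambda)\,\E[\|\mathcal{E}\|^2]}$ by Cauchy--Schwarz; this reproduces the second summand of \eqref{eq:thm_deterministic} with second moments in place of the deterministic norms. Collecting the two summands gives the expected version of \eqref{eq:thm_deterministic}. The final move is purely algebraic: substitute $\E[\|\tilde R\|]$, $\E[\|\mathcal{E}\|]$ and $\E[\|P\|]$ from Proposition \ref{prop:stoch_bound}, divide through by $\|x_0-x^*\|$, and rewrite everything using $\bar\lambda = \lambda/\|x_0-x^*\|^2$ from \eqref{eq:normalized_lambda} and $\tau = (\nu+\sigma)/\|x_0-x^*\|$. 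The $\|P\|^2/\lambda^3$ contribution collapses to $O(\tau^2(1+\tau)^2)/\bar\lambda^3$ and the nonlinearity contribution to $O(\sqrt{\tau^2 + \tau^2(1+\tau^2)/\bar\lambda})$, matching \eqref{eq:bound_stochastic_full}.

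The step I expect to be delicate is the nonlinearity term, because $\tilde c^\lambda$, $\tilde R$, $\mathcal{E}$ and $P$ are all correlated, being built from the same noisy iterates, so the expectation of their products does not factor. The clean way through is Cauchy--Schwarz together with the bounded-covariance hypothesis $\Sigma_t \preceq (\sigma^2/d)I$, which upgrades the first-moment estimates of Proposition \ref{prop:stoch_bound} to the second-moment estimates $\E[\|\tilde R\|^2] = O(\|x_0-x^*\|^2) + O((\nu+\sigma)^2)$ and $\E[\|\mathcal{E}\|^2] = O((\nu+\sigma)^2)$ of the same order. Since the statement tracks only orders of magnitude and suppresses constants, these same-order replacements are exactly what is needed to close the bound.
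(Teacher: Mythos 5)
Your proposal follows essentially the same route as the paper's proof: bound the stability term via \eqref{eq:bound_stability_temp} with the expectation landing on $\|P\|$, merge it with the acceleration bound \eqref{eq:bound_acceleration} by maximizing over $\|c^\lambda\|$ via Proposition \ref{prop:opt_val_sqrt_fun}, handle the correlated nonlinearity term by Cauchy--Schwarz with second-moment versions of the estimates in Proposition \ref{prop:stoch_bound}, and finally substitute $\tau$ and $\bar\lambda$. The delicate point you flag (decoupling $\tilde c^\lambda$ from $\mathcal{E}$ and upgrading to second moments of $\|\tilde R\|$ and $\|\mathcal{E}\|$) is exactly the step the paper carries out explicitly, so the proposal is correct and matches the paper's argument.
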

\begin{proof}
	We start with \eqref{eq:bound_stochastic}, then we use \eqref{eq:bound_stability_temp}
	\BEAS
	& &	\Big\|\sum_{i=0}^{k}  c^{\lambda}_i  x_i - x^* \Big\| 
	+ O(\|x_0-x^*\|)\E \Big[ \|\Delta c^\lambda\|\Big]
	+ \E \Big[\|\tilde c^\lambda\| \|\mathcal{E}\|\Big], \\
	& \leq & \Big\|\sum_{i=0}^{k}  c^{\lambda}_i  x_i - x^* \Big\| 
	+ O(\|x_0-x^*\|)\frac{\|c_\lambda\|}{\lambda}\E \Big[\|P\|   \Big]
	+ \sqrt{\E \Big[ \|\tilde c_\lambda \|^2\Big]\E\Big[ \|\mathcal{E}\|^2\Big]}.
	\EEAS
	The first term can be bounded by \eqref{eq:bound_acceleration},
	\[
	\Big\|\sum_{i=0}^{k}  c^{\lambda}_i  x_i - x^* \Big\| \leq \frac{1}{\kappa} \sqrt{ S_{\kappa}(k,\bar \lambda) \|x_0-x^*\|^2 - \lambda \|c^\lambda\|^2 }.
	\]
	We combine this bound with the second term by maximizing over $\|c^\lambda\|$. The optimal value is given in \eqref{eq:opt_val_sqrt_fun},
	\[
	\Big\|\sum_{i=0}^{k}  c^{\lambda}_i  x_i - x^* \Big\| 
	+ O(\|x_0-x^*\|)\frac{\|c_\lambda\|}{\lambda}\E \Big[\|P\|   \Big] \leq \|x_0-x^*\| S_{\kappa}(k,\bar{\lambda}) \sqrt{\frac{1}{\kappa^2} + \frac{O(\|x-x^*\|^2) \E [\|P\|]^2}{\lambda^3}},
	\]
	where $\bar \lambda = \lambda / \|x_0-x^*\|^2$.	Since, by Proposition \ref{prop:stoch_bound},
	\[
	\E [\|P\|]^2 \leq  O\left((\nu + \sigma)^2 \left(\|x_0-x^*\|+\nu + \sigma\right)^2\right),
	\]
	we have
	\BEA
	& & \|\sum_{i=0}^{k}  c^{\lambda}_i  x_i - x^* \| 
	+ O(\|x_0-x^*\|)\frac{\|c_\lambda\|}{\lambda}\E \Big[\|P\|   \Big] \nonumber \\
	& \leq & \|x_0-x^*\| S_{\kappa}(k,\bar{\lambda}) \sqrt{\frac{1}{\kappa^2} + \frac{O(\|x-x^*\|^2(\nu+\sigma)^2)(\|x_0-x^*\|+\nu + \sigma)^2}{\lambda^3}}. \label{eq:temp_stoch_1}
	\EEA
	The last term can be bounded using \eqref{eq:bound_nonlinearity_temp},
	\BEAS
	\sqrt{\E \Big[ \|c_\lambda \|^2\Big]\E\Big[ \|\mathcal{E}\|^2\Big]}
	& \leq & O\Big(\Big(\sum_{i=0}^k \|\mathcal{E}\|_i^2\E \Big[ \|\tilde c_\lambda \|^2\Big] \Big)^{1/2}\Big)\\
	& \leq & O\Big((\nu+\sigma)\sqrt{\E \Big[ \|\tilde c_\lambda \|^2\Big] }\Big)\\
	& \leq & O\Big((\nu+\sigma)\sqrt{\E \Big[ 1+\frac{\|\tilde R\|^2}{\lambda}\Big] }\Big)\\
	& \leq & O\Bigg((\nu+\sigma)\sqrt{ 1+\frac{\E \big[\|\tilde R\|_F^2\big]}{\lambda} }\Bigg)		
	\EEAS
	However,
	\BEAS
	\textstyle \E \Big[\|\tilde R\|_F^2\Big] & = & \textstyle \sum_{i=0}^k\E \Big[\|\tilde r_i\|^2\Big] \\
	& = & \textstyle \sum_{i=0}^k \|r_i\|^2 + \E \Big[ r_i^T\mathcal{E}_i + \|\mathcal{E}_i\|^2\Big] \\
	& \leq & O\left( \|x_0-x^*\|^2 + (\nu+\sigma) \|x_0-x^*\| + (\nu+\sigma)^2 \right)\\
	& \leq & O\left( \|x_0-x^*\| + (\nu+\sigma) )^2 \right)
	\EEAS
	Finally,
	\BEQ
	\sqrt{\E \Big[ \|c_\lambda \|^2\Big]\E\Big[ \|\mathcal{E}\|^2\Big]} \leq O\Bigg((\nu+\sigma)\sqrt{ 1+\frac{ (\|x_0-x^*\| + (\nu+\sigma))^2}{\lambda} }\Bigg) \label{eq:temp_stoch_2}
	\EEQ
	We get \eqref{eq:bound_stochastic_full} by summing \eqref{eq:temp_stoch_1} and \eqref{eq:temp_stoch_2}, then by replace all $\frac{\nu + \sigma}{\|x_0-x^*\|}$ by $\tau$ and $\frac{\lambda}{\|x_0-x^*\|^2}$ by $\bar \lambda$.
\end{proof}

Consider a situation where $\tau$ is small, e.g. when using stochastic gradient descent with fixed step-size, with $x_0$ far from $x^*$. The following proposition details the dependence between $\bar \lambda$ and $\tau$ ensuring the upper convergence bound remains stable when $\tau$ goes to zero.

\begin{proposition}\label{prop:asympt_perf_acc}
	 When $\tau \rightarrow 0$, if $\bar \lambda = \Theta(\tau^{s}) $ with $s\in ]0,\frac{2}{3}[$, we have the accelerated rate
	\BEA
		 \E \bigg[ \Big\| \sum_{i=0}^{k} \tilde c^{\lambda}_i \tilde x_i - x^* \Big\|\bigg] \leq  \frac{1}{\kappa}\left(\frac{1-\sqrt{\kappa}}{1+\sqrt{\kappa}}\right)^k\|x_0-x^*\|. \label{eq:asympt_result}
	\EEA
	Moreover, if $\lambda \rightarrow \infty$, we recover the averaged gradient,
	\[
		 \E \bigg[ \Big\| \sum_{i=0}^{k} \tilde c^{\lambda}_i \tilde x_i - x^* \Big\|\bigg] = \E \bigg[ \Big\| \frac{1}{k+1}\sum_{i=0}^{k} \tilde x_i - x^*\Big\|\bigg].
	\]
\end{proposition}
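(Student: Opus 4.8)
The plan is to read the asymptotics directly off Proposition \ref{prop:accuracy_stoch_general}, treating $\tau$ as the small parameter and $\bar\lambda = \Theta(\tau^s)$ as a prescribed decay rate, exactly mirroring the deterministic argument of Proposition \ref{prop:assympt_rate_conv}. Substituting $\bar\lambda = \Theta(\tau^s)$ into \eqref{eq:bound_stochastic_full}, the perturbation sitting inside the first square root becomes $\frac{O(\tau^2(1+\tau)^2)}{\bar\lambda^3} = O(\tau^{2-3s})$ as $\tau\to 0$, while the additive term reduces to $O\big(\sqrt{\tau^2 + \tau^{2-s}}\big)$. The admissible window for $s$ then emerges transparently from balancing two competing requirements: we need $s>0$ so that $\bar\lambda\to 0$ and the regularized Chebyshev value is driven toward its unregularized limit, and we need $2-3s>0$, i.e. $s<\tfrac{2}{3}$, so that the stability contribution $\tau^{2-3s}$ inside the root vanishes. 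For $s\in\,]0,\tfrac{2}{3}[$ every exponent of $\tau$ that appears is strictly positive, so both error pieces tend to zero.

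With the noise terms gone, I would pass to the limit inside $S_\kappa(k,\bar\lambda)$ using continuity of the regularized Chebyshev value in its second argument, so that $S_\kappa(k,\bar\lambda)\to S_\kappa(k,0)$ and the whole right-hand side collapses to $\frac{1}{\kappa}S_\kappa(k,0)$. Invoking Corollary \ref{cor:conv_reg_cheby}, which identifies the unregularized value with the accelerated Chebyshev rate, replaces $S_\kappa(k,0)$ by $\big(\tfrac{1-\sqrt{\kappa}}{1+\sqrt{\kappa}}\big)^k$ and yields the claimed bound $\frac{1}{\kappa}\big(\tfrac{1-\sqrt{\kappa}}{1+\sqrt{\kappa}}\big)^k\|x_0-x^*\|$.

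For the second assertion I would argue directly from the variational definition of the coefficients in Algorithm \ref{algo:acc_fixedpoint_reg}, namely $\tilde c^\lambda = \argmin_{c^T\ones=1}\|\tilde R c\|^2 + \lambda\|c\|^2$. As $\lambda\to\infty$ the regularizer dominates the data-fit term, so the problem degenerates to $\min_{c^T\ones=1}\|c\|^2$, whose unique minimizer is the uniform vector $\tilde c^\lambda_i = \tfrac{1}{k+1}$; the same conclusion follows from the closed form, since for large $\lambda$ one has $z = (\tilde R^T\tilde R + \lambda I)^{-1}\ones \approx \ones/\lambda$, whence $\tilde c^\lambda = z/(\ones^Tz)\to \ones/(k+1)$. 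Substituting these coefficients turns $\sum_i \tilde c^\lambda_i \tilde x_i$ into the plain average $\frac{1}{k+1}\sum_i \tilde x_i$, giving the stated identity.

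The delicate part is the first half, and specifically the bookkeeping of the $\tau$-exponents: one must verify that among all the noise-dependent contributions the single most dangerous one is the stability term scaling as $\tau^{2-3s}$, that it alone pins the upper endpoint $s=\tfrac{2}{3}$, and that no other term imposes a stricter constraint. The continuity of $S_\kappa(k,\cdot)$ at the origin, which licenses exchanging the limit with the Chebyshev value once $\bar\lambda\to 0$, should be recorded explicitly, since it is precisely what decouples the controlled decay of the regularization from the decay of the noise.
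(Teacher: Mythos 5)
Your proposal is correct and follows essentially the same route as the paper: substitute $\bar\lambda=\Theta(\tau^s)$ into \eqref{eq:bound_stochastic_full}, check that all $\tau$-exponents are positive precisely when $s\in\,]0,\tfrac{2}{3}[$ (with the stability term $\tau^{2-3s}$ being the binding one), pass to the limit $S_\kappa(k,\bar\lambda)\to S_\kappa(k,0)$ and invoke Corollary \ref{cor:conv_reg_cheby}, and obtain the uniform weights from the variational characterization of $\tilde c^\lambda$ as $\lambda\to\infty$. Your bookkeeping of the additive term as $O(\sqrt{\tau^2+\tau^{2-s}})$ is in fact the correct exponent (the paper's displayed $\tau^{2-3s}$ there appears to be a typo), and your explicit appeal to continuity of $S_\kappa(k,\cdot)$ at the origin only makes the argument cleaner.
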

\begin{proof}
	Let $\bar{\lambda} = \Theta(\tau^s)$, using \eqref{eq:bound_stochastic_full} we have
\BEAS
		 \E \bigg[ \Big\| \sum_{i=0}^{k} \tilde c^{\lambda}_i \tilde x_i - x^*\Big\|\bigg] &\leq&
		 \|x_0-x^*\|  S_{\kappa}(k,\tau^s) \sqrt{\frac{1}{\kappa^2} O(\tau^{2-3s}(1+\tau)^2)} \\
		& &  + \|x_0-x^*\|O(\sqrt{ \tau^2+\tau^{2-3s}(1+ \tau^2) }).
\EEAS
	Because $s\in ]0,\frac{2}{3}[$, means $2-3s>0$, thus $\lim_{\tau\rightarrow 0} \tau^{2-3s} = 0$. The limits when $\tau \rightarrow 0$ is thus exactly \eqref{eq:asympt_result}. If $\lambda \rightarrow \infty$, we have also
	\[
		 \lim\limits_{\lambda \rightarrow\infty}\tilde c_\lambda \; = \;  \lim\limits_{\lambda \rightarrow\infty} \argmin_{c:\ones^Tc = 1} \|\tilde Rc\|  + \lambda \|c\|^2 \; = \;  \argmin_{c:\ones^Tc = 1} \|c\|^2 \; = \; \frac{\ones}{k+1}
	\] which yields the desired result.
\end{proof}

Proposition \ref{prop:asympt_perf_acc} shows that Algorithm \ref{algo:acc_fixedpoint_reg} is thus asymptotically optimal provided $\lambda$ is well chosen because it recovers the accelerated rate for smooth and strongly convex functions when the perturbations goes to zero. Moreover, we recover Proposition \ref{prop:assympt_rate_conv} when $\epsilon_t$ is the Taylor remainder, i.e. with $\nu = O(\|x_0-x^*\|^2)$ and $\sigma = 0$, which matches the deterministic results.

Algorithm \ref{algo:acc_fixedpoint_reg} is particularly efficient when combined with a restart scheme \citep{scieur2016regularized}. From a theoretical point of view, the acceleration peak arises for small values of $k$. Empirically, the improvement is usually more important at the beginning, i.e. when $k$ is small. Finally, the algorithmic complexity is $O(k^2d)$, which is linear in the problem dimension when $k$ remains bounded. 

The benefits of extrapolation are limited in a regime where the noise dominates. However, when $\tau$ is relatively small then we can expect a significant speedup. This condition is satisfied in many cases, for example at the initial phase of the stochastic gradient descent or when optimizing a sum of functions with variance reduction techniques, such as SAGA or SVRG.

\section{Numerical Experiments}

\subsection{Stochastic gradient descent}
We want to solve the least-square problem
\[
	\min_{x\in R^d} F(x) = \frac{1}{2} \|Ax-b\|^2
\]
where $A^TA$ satisfies $\mu I \preceq (A^TA) \preceq LI$. To solve this problem, we have access to the stochastic first-order oracle
\[
	\nabla_\varepsilon F(x) = \nabla F(x) + \varepsilon,
\]
where $\varepsilon$ is a zero-mean noise of covariance matrix $\Sigma \preceq \frac{\sigma^2}{d}\textbf{I}$. We will compare several methods.
\begin{itemize}\itemsep 0ex
\item	 \textbf{SGD.} Fixed step-size, $x_{t+1} = x_t-\frac{1}{L}\nabla_\varepsilon F(x_t)$.
\item	 \textbf{Averaged SGD.} Iterate $x_k$ is the mean of the $k$ first iterations of SGD.
\item	 \textbf{AccSGD.} The optimal two-step algorithm in \citet{flammarion2015averaging}, with optimal parameters (this implies $\|x_0-x^*\|$ and $\sigma$ are known exactly).
\item	 \textbf{RNA+SGD.} The regularized nonlinear acceleration Algorithm \ref{algo:acc_fixedpoint_reg} applied to a sequence of $k$ iterates of SGD, with $k = 10$ and $\lambda = \|\tilde R^T \tilde R\|/10^{-6}$.
\end{itemize}
	 
%\end{itemize}
By Proposition \ref{prop:accuracy_stoch_general}, we know that RNA+SGD will \emph{not} converge to arbitrary precision because the noise is additive with a non-vanishing variance. However, Proposition \ref{prop:asympt_perf_acc} predicts an improvement of the convergence at the beginning of the process. We illustrate this behavior in Figure \ref{fig:exp_sgd}. We clearly see that at the beginning, the performances of RNA+SGD is comparable to that of the optimal accelerated algorithm. However, because of the restart strategy, in the regime where the level of noise becomes more important the acceleration becomes less effective and finally the convergence stalls, as for SGD. Of course, for practical purposes, the first regime is the most important because it effectively minimizes the generalization error \citep{defossez2015averaged,jain2016parallelizing}.

\begin{figure}[t]
	\centering
	
	\psfrag{sgd}{\footnotesize SGD}
	\psfrag{avesgd}{\footnotesize Ave. SGD}
	\psfrag{accsgd}{\footnotesize Acc. SGD}
	\psfrag{rmpesgd}{\footnotesize{ RNA + SGD}}
	\includegraphics[width=0.7\textwidth]{figs/legend_sgd_ls.eps}
	~\\
	
	\psfrag{xlabel}{\scriptsize Iteration}
	\psfrag{valf}[b]{\scriptsize $f(x)-f(x^*)$}
	\includegraphics[width=0.33\textwidth]{figs/dist_1e4_sigma_10_d_300_kappa_2.eps}
	\psfrag{valf}[b]{\scriptsize }
	\includegraphics[width=0.33\textwidth]{figs/dist_1e4_sigma_1000_d_300_kappa_2.eps}
	\psfrag{valf}[b]{\scriptsize }
	\includegraphics[width=0.33\textwidth]{figs/dist_1e4_sigma_1000_d_300_kappa_6.eps}
	\caption*{\textbf{Left:} $\sigma = 10$, $\kappa = 10^{-2}$. \textbf{Center:} $\sigma = 1000$, $\kappa = 10^{-2}$. \textbf{Right:} $\sigma = 1000$, $\kappa = 10^{-6}$. }
	~\\
	
	\psfrag{valf}[b]{\scriptsize $f(x)-f(x^*)$}
	\includegraphics[width=0.33\textwidth]{figs/dist_1e4_sigma_10_d_300_decay_1n.eps}
	\psfrag{valf}[b]{\scriptsize }
	\includegraphics[width=0.33\textwidth]{figs/dist_1e4_sigma_100_d_300_decay_1n.eps}
	\psfrag{valf}[b]{\scriptsize }
	\includegraphics[width=0.33\textwidth]{figs/dist_1e4_sigma_1000_d_300_decay_1n.eps}
	
	\caption*{\textbf{Left:} $\sigma = 10$, $\kappa = 1/d$. \textbf{Center:} $\sigma = 100$, $\kappa = 1/d$. \textbf{Right:} $\sigma = 1000$, $\kappa = 1/d$.}
	~\\

\caption{Comparison of performances between SGD, averaged SGD, Accelerated SGD \citep{flammarion2015averaging} and RNA+SGD. We tested the performances on a matrix $A^TA$ of size $d=500$, with (\textbf{top}) random eigenvalues between $\kappa$ and $1$ and (\textbf{bottom}) decaying eigenvalues from $1$ to $1/d$. We start at $\|x_0-x^*\| = 10^{4}$, where $x_0$ and $x^*$ are generated randomly. \newline }
	\label{fig:exp_sgd}
\end{figure}

\subsection{Finite sums of functions}
We focus on the composite problem $\min_{x\in R^d}  F(x) = \sum_{i=1}^{N} \frac{1}{N} f_i(x) + \frac{\mu}{2} \|x\|^2$, where $f_i$ are convex and $L$-smooth functions and $\mu$ is the regularization parameter. We will use classical methods for minimizing $F(x)$ such as SGD (with fixed step size), SAGA \citep{defazio2014saga}, SVRG \citep{johnson2013accelerating}, and also the accelerated algorithm Katyusha \citep{allen2016katyusha}. We will compare their performances with and without the (potential) acceleration provided by Algorithm \ref{algo:acc_fixedpoint_reg} with restart each $k$ iteration. The parameter $\lambda$ is found by a grid search of size $k$, the size of the input sequence, but it adds only \emph{one} data pass at each extrapolation. Actually, the grid search can be faster if we approximate $F(x)$ with fewer samples, but we choose to present Algorithm~\ref{algo:acc_fixedpoint_reg} in its simplest version. We set $k = 10$ for all the experiments.

In order to balance the complexity of the extrapolation algorithm and the optimization method we wait several data queries before adding the current point (the ``snapshot'') of the method to the sequence. Indeed, the extrapolation algorithm has a complexity of $O(k^2d) + O(N)$ (computing the coefficients $\tilde c^\lambda$ and the grid search over $\lambda$). If we wait at least $O(N)$ updates, then the extrapolation method is of the same order of complexity as the optimization algorithm.

\begin{itemize}\itemsep 0ex
\item	 \textbf{SGD.} We add the current point after $N$ data queries (i.e. one epoch) and $k$ snapshots of SGD cost $kN$ data queries.
\item	 \textbf{SAGA.} We compute the gradient table exactly, then we add a new point after $N$ queries, and $k$ snapshots of SAGA cost $(k+1)N$ queries. Since we optimize a sum of quadratic or logistic losses, we used the version of SAGA which stores $O(N)$ scalars.
\item	 \textbf{SVRG.} We compute the gradient exactly, then perform $N$ queries (the inner-loop of SVRG), and $k$ snapshots of SVRG cost $2kN$ queries.
\item	 \textbf{Katyusha.} We compute the gradient exactly, then perform $4N$ gradient calls (the inner-loop of Katyusha), and $k$ snapshots of Katyusha cost $3kN$ queries.
\end{itemize}

We compare these various methods for solving least-square regression and logistic regression on several datasets (Table \ref{table:datasets}), with several condition numbers $\kappa$: well ($\kappa = 100/N$), moderately ($\kappa = 1/N$) and badly ($\kappa = 1/100N$) conditioned. In this section, we present the numerical results on \texttt{Sid} (Sido0 dataset, where $N = 12678$ and $d = 4932$) with bad conditioning, see Figure \ref{fig:exp_sido0}. The other experiments are highlighted in the supplementary material.

In Figure \ref{fig:exp_sido0}, we clearly see that both SGD and AccSGD do not converge. This is mainly due to the fact that we do not average the points. In any case, except for quadratic problems, the averaged version of SGD does not converge to the minimum of $F$ with arbitrary precision.

We also notice that Algorithm \ref{algo:acc_fixedpoint_reg} is unable to accelerate Katyusha. This issue was already raised by \citet{scieur2016regularized}: when the algorithm has a momentum term (like the Nesterov's method), the underlying dynamical system is harder to extrapolate.

Because the iterates of SAGA and SVRG have low variance, their accelerated version converges faster to the optimum, and their performances are then comparable to Katyusha. In our experiments, Katyusha was faster than AccSAGA only once, when solving a least square problem on Sido0 with a bad condition number. Recall however that the acceleration Algorithm~\ref{algo:acc_fixedpoint_reg} does not require the specification of the strong convexity parameter, unlike Katyusha.

\begin{figure}
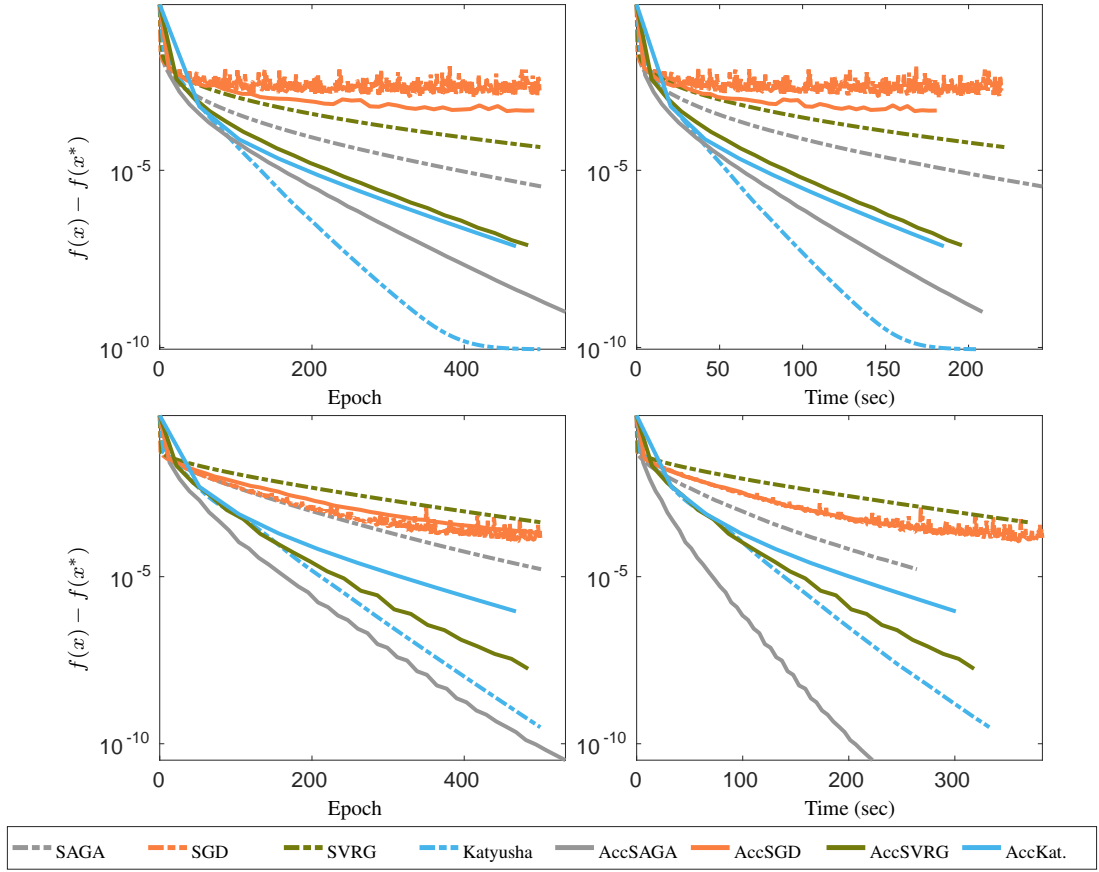

	\centering
	
	\psfrag{valf}[b]{\scriptsize $f(x)-f(x^*)$}
	\psfrag{xlabel}{\scriptsize Epoch}
		\includegraphics[width=0.40\textwidth]{figs/classif/leastsquare_sido0_bad_cond_12678_4932_nite.eps}
		\psfrag{xlabel}{\scriptsize Time (sec)}
		\psfrag{valf}[b]{\scriptsize }
		\includegraphics[width=0.40\textwidth]{figs/classif/leastsquare_sido0_bad_cond_12678_4932_time.eps}
	~\\
	\psfrag{valf}[b]{\scriptsize $f(x)-f(x^*)$}
	\psfrag{xlabel}{\scriptsize Epoch}
	\includegraphics[width=0.40\textwidth]{figs/classif/logistic_sido0_bad_cond_12678_4932_nite.eps}
	\psfrag{xlabel}{\scriptsize Time (sec)}
	\psfrag{valf}[b]{\scriptsize }
	\includegraphics[width=0.40\textwidth]{figs/classif/logistic_sido0_bad_cond_12678_4932_time.eps}
	\psfrag{data1}{\tiny SAGA}
	\psfrag{data2}{\tiny SGD}
	\psfrag{data3}{\tiny SVRG}
	\psfrag{data4}{\tiny Katyusha}
	\psfrag{data5}{\tiny AccSAGA}
	\psfrag{data6}{\tiny AccSGD}
	\psfrag{data7}{\tiny AccSVRG}
	\psfrag{data8}{\tiny AccKat.}
	\includegraphics[width=0.9\textwidth]{figs/classif/legend.eps}
	\caption{Optimization of quadratic loss (\textbf{Top}) and logistic loss (\textbf{Bottom}) with several algorithms, using the \texttt{Sid} dataset with bad conditioning. The experiments are done in Matlab. \textbf{Left:} Error vs epoch number. \textbf{Right:} Error vs time. }
	\label{fig:exp_sido0}
\end{figure}

\subsection{Neural Networks} In this section we will use the RNA algorithm with $k=15$ and $\lambda=10^{-5}$ in order to reduce  the training time of neural networks. We focus on the training loss value and the test error (percentage of bad predictions). We tested the acceleration on several (deep) neural networks for image classification. In all the experiments we use the classical SGD + momentum (set at 0.9) optimization algorithm, with a weight decay of \texttt{1e-5}. The learning rate changes in function of the neural network, and will be specified in each section. Unless it is specified, we do not use the restart strategy.

This time, we do not use the adaptive value of $\lambda$, because performing a complete data pass with neural networks may take a lot of time. However, as mentioned above, one can use a portion of the samples instead of the complete dataset to estimate the effectiveness of a given regularization parameter. But again, we choose to use the RNA algorithm in its simplest version.

\subsubsection{LeNet}
We first use the standard network of \citet{lecun1998gradient} for classifying images from MNIST (Figure~\ref{fig:lenet_mnist}) and CIFAR10 (Figure \ref{fig:lenet_cifar10}) datasets. We used a learning rate of $10^{-2}$. The network performs well in practice on MNIST, so we do not see acceleration. However, we clearly have an important improvement on CIFAR10 and the extrapolated network performs a better generalization error than the original one.

\begin{figure}[h!]
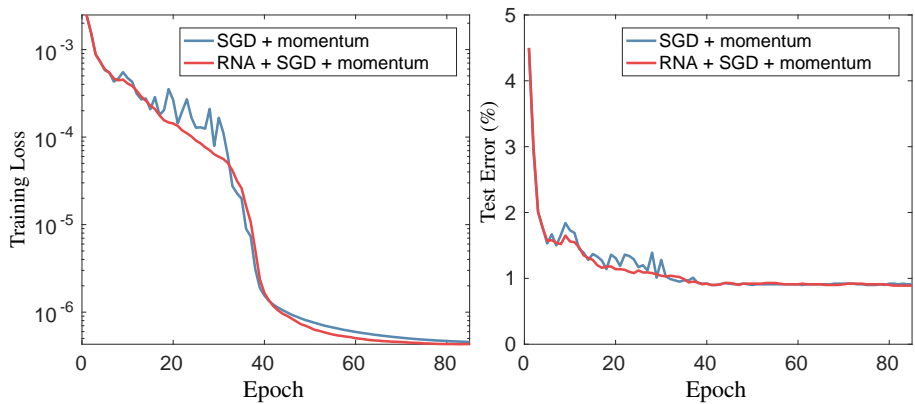

	\centering
	\psfrag{TrainingLoss}{\scriptsize Training Loss}
	\psfrag{epoch}{\footnotesize Epoch}
	\includegraphics[width=0.45\textwidth]{figs/nn/training_error_LeNet_MNIST_SGDmomentum_rna_sliding.eps}
	\psfrag{TestAcc}{\scriptsize Test Error (\%)}
	\psfrag{epoch}{\footnotesize Epoch}
	\includegraphics[width=0.45\textwidth]{figs/nn/test_error_LeNet_MNIST_SGDmomentum_rna_sliding.eps}
	\caption{Training LeNet on MNIST dataset}
	\label{fig:lenet_mnist}
\end{figure}
\begin{figure}[h!]
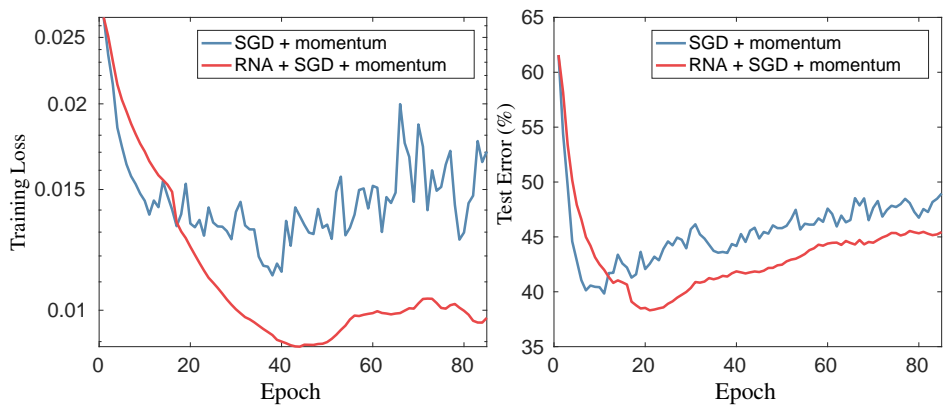

	\centering
	\psfrag{TrainingLoss}{\scriptsize Training Loss}
	\psfrag{epoch}{\footnotesize Epoch}
	\includegraphics[width=0.45\textwidth]{figs/nn/training_error_LeNet_CIFAR10_SGDmomentum_rna_sliding.eps}
	\psfrag{TestAcc}{\scriptsize Test Error (\%)}
	\psfrag{epoch}{\footnotesize Epoch}
	\includegraphics[width=0.45\textwidth]{figs/nn/test_error_LeNet_CIFAR10_SGDmomentum_rna_sliding.eps}
	\caption{Training LeNet on CIFAR10 dataset}
	\label{fig:lenet_cifar10}
\end{figure}

\subsubsection{MobileNet}
We also applied the acceleration algorithm on MobileNet \citep{howard2017mobilenets} with a learning rate of $10^{-2}$. For this network, the training loss of the accelerated version decays much faster than the original network. \newpage

\begin{figure}[h!]
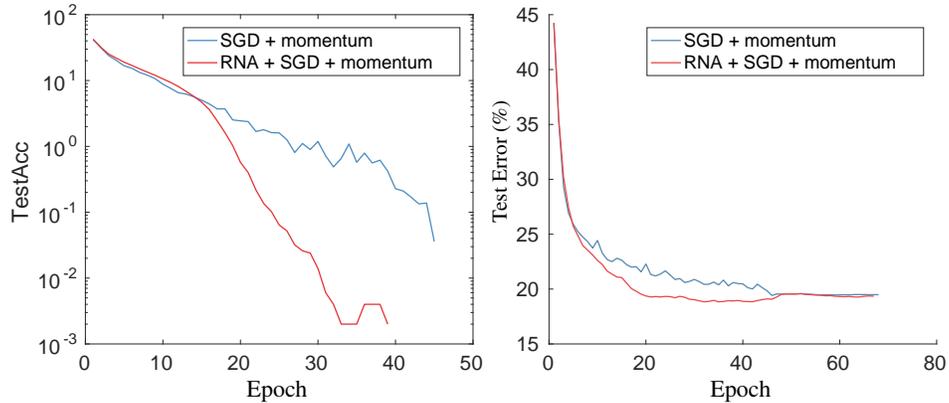

	\centering
	\psfrag{TestAcc}{\scriptsize Training Loss}
	\psfrag{epoch}{\footnotesize Epoch}
	\includegraphics[width=0.45\textwidth]{figs/nn/training_error_MobileNet_CIFAR10_SGDmomentum_rna_sliding.eps}
	\psfrag{TestAcc}{\scriptsize Test Error (\%)}
	\psfrag{epoch}{\footnotesize Epoch}
	\includegraphics[width=0.45\textwidth]{figs/nn/test_error_MobileNet_CIFAR10_SGDmomentum_rna_sliding.eps}
	\caption{Training MobileNet on CIFAR10 dataset}
	\label{fig:mobilenet_cifar10}
\end{figure}

\subsubsection{Wide Residual Networks}
We will compare the results of the acceleration of shallow and deep residual networks \citep{zagoruyko2016wide}. This time, the learning rate starts at $10^{-1}$ and decays over time by a ratio of $\frac{1}{5}$ after $\{60,120,160\}$ data passes (the update of the learning rate is shown with the dotted lines in the figures). We used Resnet-10-1 on CIFAR10 (Figure \ref{fig:resnet_cifar10}) and CIFAR100 (Figure \ref{fig:resnet_cifar100}), as well as Resnet-28-10 on CIFAR10 (Figure \ref{fig:resnet_deep_cifar10}) and CIFAR100 (Figure \ref{fig:resnet_deep_cifar100}). The last network achieves a test accuracy  of 81.72\% of correct answers, only after 82 epochs (without dropout), meanwhile the unaccelerated network achieves 80,75\% after 200 epochs (up to 81.15 if we also use dropout).

On these figures, we clearly see an important improvement both in term of training loss and test accuracy when using the extrapolated network. Moreover, the learning rate can be updated earlier if we use the extrapolation step as the new starting point. This strategy is illustrated in Figure \ref{fig:resnet_restart_cifar10}, where the learning rate decays at a rate of $\sqrt{1/5}$ every 25 epochs.

\begin{figure}[h!]
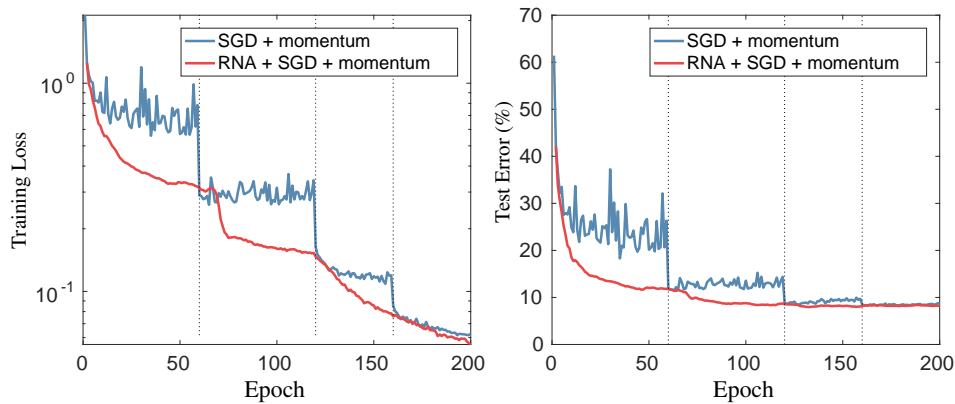

	\centering
	\psfrag{TrainingLoss}{\scriptsize Training Loss}
	\psfrag{epoch}{\footnotesize Epoch}
	\includegraphics[width=0.45\textwidth]{figs/nn/training_error_resnet_CIFAR10_SGDmomentum_rna_sliding.eps}
	\psfrag{TestAcc}{\scriptsize Test Error (\%)}
	\psfrag{epoch}{\footnotesize Epoch}
	\includegraphics[width=0.45\textwidth]{figs/nn/test_error_resnet_CIFAR10_SGDmomentum_rna_sliding.eps}
	\caption{Training Resnet-10-1 on CIFAR10 dataset}
	\label{fig:resnet_cifar10}
\end{figure}
\begin{figure}[h!]
	\centering
	\psfrag{TrainingLoss}{\scriptsize Training Loss}
	\psfrag{epoch}{\footnotesize Epoch}
	\includegraphics[width=0.45\textwidth]{figs/nn/training_error_resnet_CIFAR100_SGDmomentum_rna_sliding.eps}
	\psfrag{TestAcc}{\scriptsize Test Error (\%)}
	\psfrag{epoch}{\footnotesize Epoch}
	\includegraphics[width=0.45\textwidth]{figs/nn/test_error_resnet_CIFAR100_SGDmomentum_rna_sliding.eps}
	\caption{Training Resnet-10-1 on CIFAR100 dataset}
	\label{fig:resnet_cifar100}
\end{figure}
\begin{figure}[h!]
	\centering
	\psfrag{TrainingLoss}{\scriptsize Training Loss}
	\psfrag{epoch}{\footnotesize Epoch}
	\includegraphics[width=0.45\textwidth]{figs/nn/training_error_resnet_deep_CIFAR10_SGDmomentum_rna_sliding.eps}
	\psfrag{TestAcc}{\scriptsize Test Error (\%)}
	\psfrag{epoch}{\footnotesize Epoch}
	\includegraphics[width=0.45\textwidth]{figs/nn/test_error_resnet_deep_CIFAR10_SGDmomentum_rna_sliding.eps}
	\caption{Training Resnet-28-10 on CIFAR10 dataset}
	\label{fig:resnet_deep_cifar10}
\end{figure}
\begin{figure}[h!]
	\centering
	\psfrag{TrainingLoss}{\scriptsize Training Loss}
	\psfrag{epoch}{\footnotesize Epoch}
	\includegraphics[width=0.45\textwidth]{figs/nn/training_error_resnet_deep_CIFAR100_SGDmomentum_rna_sliding.eps}
	\psfrag{TestAcc}{\scriptsize Test Error (\%)}
	\psfrag{epoch}{\footnotesize Epoch}
	\includegraphics[width=0.45\textwidth]{figs/nn/test_error_resnet_deep_CIFAR100_SGDmomentum_rna_sliding.eps}
	\caption{Training Resnet-28-10 on CIFAR100 dataset}
	\label{fig:resnet_deep_cifar100}
\end{figure}
\begin{figure}[h!]
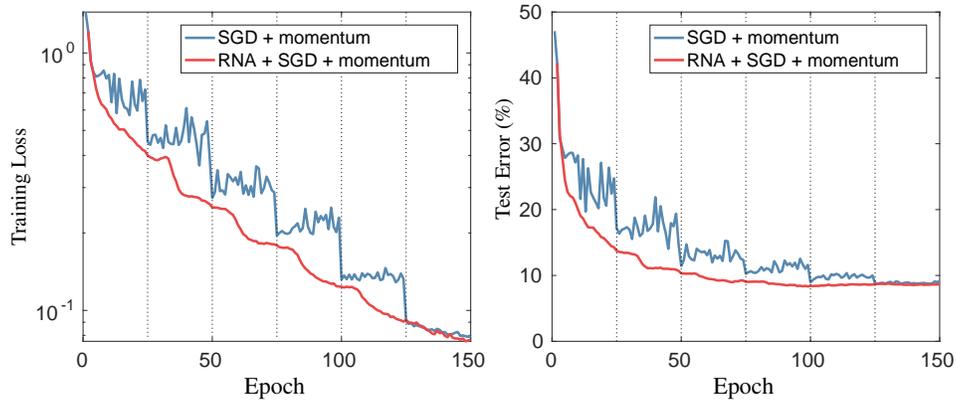

	\centering
	\psfrag{TrainingLoss}{\scriptsize Training Loss}
	\psfrag{epoch}{\footnotesize Epoch}
	\includegraphics[width=0.45\textwidth]{figs/nn/training_error_resnet_CIFAR10_SGDmomentum_rna_restart.eps}
	\psfrag{TestAcc}{\scriptsize Test Error (\%)}
	\psfrag{epoch}{\footnotesize Epoch}
	\includegraphics[width=0.45\textwidth]{figs/nn/test_error_resnet_CIFAR10_SGDmomentum_rna_restart.eps}
	\caption{Training Resnet-10-1 with a restart strategy. When updating the learning rate, we use the extrapolated version of the network as the new starting point.}
	\label{fig:resnet_restart_cifar10}
\end{figure}

\section{Acknowledgments}
The research leading to these results has received funding from the European Union’s Seventh Framework Programme (FP7-PEOPLE-2013-ITN) under grant agreement no 607290 SpaRTaN, as well as support from ERC SIPA and the chaire \'Economie des nouvelles donn\'ees with the data science joint research initiative with the fonds AXA pour la recherche.

\section{Appendix}
\subsection{Missing propositions}
\begin{proposition}
	Let $\mathcal{E}$ be a matrix formed by $[\epsilon_1,\epsilon_2,...,\epsilon_k]$, where $\epsilon_i$ has mean $\|\nu_i\|\leq  \nu$ and variance $\Sigma_i \preceq \sigma \textbf{I}$. By triangle inequality then Jensen's inequality, we have
	\BEQ
	\E[\|\mathcal{E}\|_2] \leq \sum_{i=0}^k  \E[\|\varepsilon_i\|] \leq \sum_{i=0}^k\sqrt{\E [\|\varepsilon_i\|^2]} \leq O(\nu + \sigma) \label{eq:jensen_ineq_matrix}.
	\EEQ
\end{proposition}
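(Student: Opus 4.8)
The plan is to establish the three inequalities in the displayed chain in sequence, each of them elementary. First I would prove the leftmost bound $\E[\|\mathcal{E}\|_2] \leq \sum_{i=0}^k \E[\|\varepsilon_i\|]$ by writing the matrix as a sum of rank-one terms, $\mathcal{E} = \sum_{i=0}^k \varepsilon_i e_i^T$, where $e_i$ is the $i$-th canonical basis vector. Since each rank-one block satisfies $\|\varepsilon_i e_i^T\|_2 = \|\varepsilon_i\|\,\|e_i\| = \|\varepsilon_i\|$, subadditivity of the spectral norm gives $\|\mathcal{E}\|_2 \leq \sum_{i=0}^k \|\varepsilon_i\|$ pointwise, and taking expectations yields the first claim.

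For the middle inequality I would apply Jensen's inequality to the concave map $t \mapsto \sqrt{t}$: for each column, $\E[\|\varepsilon_i\|] = \E\big[\sqrt{\|\varepsilon_i\|^2}\big] \leq \sqrt{\E[\|\varepsilon_i\|^2]}$, and summing over $i$ gives the second bound.

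The last inequality is where the hypotheses on the noise enter. I would decompose the second moment of each column into its bias and variance contributions, using $\E[\|\varepsilon_i\|^2] = \E[\|\varepsilon_i - \nu_i\|^2] + \|\nu_i\|^2 = \operatorname{tr}(\Sigma_i) + \|\nu_i\|^2$, where the cross term vanishes because $\varepsilon_i - \nu_i$ is centered. Using $\|\nu_i\| \leq \nu$ together with the covariance bound, so that $\operatorname{tr}(\Sigma_i) = O(\sigma^2)$, this gives $\E[\|\varepsilon_i\|^2] \leq O(\sigma^2 + \nu^2)$ and hence $\sqrt{\E[\|\varepsilon_i\|^2]} \leq O(\sigma + \nu)$. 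Summing the $k+1$ identical column bounds then produces $(k+1)\,O(\nu + \sigma) = O(\nu + \sigma)$, with $k$ treated as a fixed constant absorbed into the $O(\cdot)$.

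The only genuinely delicate point is the normalization of the covariance in the final step: read literally, the hypothesis $\Sigma_i \preceq \sigma I$ would give $\operatorname{tr}(\Sigma_i) \leq \sigma d$ and a dimension-dependent bound, so to recover the claimed $O(\nu + \sigma)$ I would interpret the variance assumption in the scaling used in the main text, $\Sigma_t \preceq (\sigma^2/d)I$, under which $\operatorname{tr}(\Sigma_i) \leq \sigma^2$ independently of $d$. Everything else is a routine concatenation of the triangle inequality for the spectral norm, Jensen's inequality, and the bias--variance split, so no substantial obstacle arises.
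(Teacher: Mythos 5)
Your argument is correct and is exactly the route the paper intends: the statement's own phrase ``by triangle inequality then Jensen's inequality'' is the entire proof, which you have simply made explicit via the column decomposition $\mathcal{E}=\sum_i \varepsilon_i e_i^T$, concavity of the square root, and the bias--variance split $\E[\|\varepsilon_i\|^2]=\operatorname{tr}(\Sigma_i)+\|\nu_i\|^2$. Your observation that the appendix's hypothesis $\Sigma_i\preceq\sigma I$ must be read with the main text's normalization $\Sigma_t\preceq(\sigma^2/d)I$ to avoid a dimension-dependent constant is a correct and worthwhile catch, not a flaw in your proof.
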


\begin{proposition} \label{prop:opt_val_sqrt_fun}
	Consider the function
	\[
	f(x) = \frac{1}{\kappa}\sqrt{a - \lambda x^2} + bx 
	\]
	defined for $x \in [0,\sqrt{a/\lambda}]$. The its maximal value is attained at
	\[
	x_{ \text{opt} } = \frac{b \sqrt{a}}{\sqrt{\frac{\lambda^2}{\kappa^2} + \lambda b^2}},
	\]
	and its maximal value is thus, if $x_{ \text{opt} } \in [0,\sqrt{a/\lambda}]$,
	\BEA
	f_{\max} =  \sqrt{a} \sqrt{\frac{1}{\kappa^2} + \frac{b^2}{\lambda}} \label{eq:opt_val_sqrt_fun}.
	\EEA
\end{proposition}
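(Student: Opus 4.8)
The plan is to treat this as an elementary one-dimensional constrained maximization. First I would observe that $f$ is concave on the interval $[0,\sqrt{a/\lambda}]$: the map $x \mapsto \sqrt{a-\lambda x^2}$ traces the upper half of the ellipse $\lambda x^2 + y^2 = a$, which is a concave function of $x$ on its domain, and adding the affine term $bx$ preserves concavity. Concavity guarantees that any interior stationary point is the global maximizer, so it suffices to solve $f'(x)=0$ and substitute back, with no need for a second-derivative test.

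Next I would compute the derivative,
\[
f'(x) = b - \frac{\lambda x}{\kappa\sqrt{a-\lambda x^2}},
\]
and set it to zero, giving $\lambda x = b\kappa\sqrt{a-\lambda x^2}$. Squaring both sides and isolating $x^2$ yields
\[
x^2 = \frac{b^2 a}{\frac{\lambda^2}{\kappa^2}+\lambda b^2},
\]
whose nonnegative root is exactly $x_{\text{opt}}$ as stated. Squaring only introduces the spurious negative root, which is discarded since we restrict to $x\ge 0$, so $x_{\text{opt}}$ is the unique interior stationary point.

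Finally I would substitute $x_{\text{opt}}$ into $f$. The clean route is to first evaluate $a-\lambda x_{\text{opt}}^2 = a(\lambda/\kappa^2)/(\lambda/\kappa^2+b^2)$, so that writing $D=\sqrt{\lambda/\kappa^2+b^2}$ gives $\tfrac{1}{\kappa}\sqrt{a-\lambda x_{\text{opt}}^2}=\sqrt{a}\sqrt{\lambda}/(\kappa^2 D)$ and $b\,x_{\text{opt}}=b^2\sqrt{a}/(\sqrt{\lambda}\,D)$. Summing these and factoring $\sqrt{a}/(\sqrt{\lambda}\,D)$ collapses the bracket into $D^2$, leaving $f_{\max}=\sqrt{a}\,D/\sqrt{\lambda}=\sqrt{a}\sqrt{1/\kappa^2+b^2/\lambda}$, which is \eqref{eq:opt_val_sqrt_fun}. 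The hypothesis $x_{\text{opt}}\in[0,\sqrt{a/\lambda}]$ ensures the stationary point is feasible, so there is no need to compare against the boundary values $f(0)=\sqrt{a}/\kappa$ and $f(\sqrt{a/\lambda})=b\sqrt{a/\lambda}$.

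There is no serious obstacle: the argument is routine calculus. The only care needed is the algebraic bookkeeping in the final substitution, where recognizing the factor $D^2=\lambda/\kappa^2+b^2$ is what produces the compact closed form, together with the observation that concavity (rather than a direct second-derivative computation) is the cleanest way to certify that the stationary point is a maximum.
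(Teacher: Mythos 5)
Your proof is correct and follows essentially the same route as the paper: solve $f'(x)=0$ for the unique positive stationary point and substitute back, with the algebra organized slightly differently but landing on the same closed form. Your one genuine addition is the concavity observation certifying that the stationary point is the global maximum on the interval --- a justification the paper's proof omits (it simply takes the positive root of the derivative and plugs it in), so your version is if anything marginally more complete.
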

\begin{proof}
	The (positive) root of the derivative of $f$ follows
	\[
	b\sqrt{a-\lambda x^2} - \frac{1}{\kappa}\lambda x = 0 \qquad \Leftrightarrow \qquad x = \frac{b \sqrt{a}}{\sqrt{\frac{\lambda^2}{\kappa^2} + \lambda b^2}} .
	\]
	If we inject the solution in our function, we obtain its maximal value,
	\BEAS
	\frac{1}{\kappa} \sqrt{a - \lambda \left(\frac{b \sqrt{a}}{\sqrt{\frac{\lambda^2}{\kappa^2} + \lambda b^2}}\right)^2} + b \frac{b \sqrt{a}}{\sqrt{\frac{\lambda^2}{\kappa^2} + \lambda b^2}} & = & \frac{1}{\kappa} \sqrt{a - \lambda \frac{b^2 a}{\frac{\lambda^2}{\kappa^2} + \lambda b^2}} + b \frac{b \sqrt{a}}{\sqrt{\frac{\lambda^2}{\kappa^2} + \lambda b^2}}, \\
	& = & \frac{1}{\kappa} \sqrt{a - \lambda \frac{b^2 a}{\frac{\lambda^2}{\kappa^2} + \lambda b^2}} + b \frac{b \sqrt{a}}{\sqrt{\frac{\lambda^2}{\kappa^2} + \lambda b^2}}, \\
	& = & \frac{1}{\kappa} \sqrt{\frac{a \lambda^2 \frac{1}{\kappa^2}}{\frac{\lambda^2}{\kappa^2} + \lambda b^2}} + b \frac{b \sqrt{a}}{\sqrt{\frac{\lambda^2}{\kappa^2} + \lambda b^2}}, \\
	& = &  \sqrt{a} \frac{ \frac{1}{\kappa^2} \lambda + b^2 }{\sqrt{\frac{\lambda^2}{\kappa^2} + \lambda b^2}}, \\
	& = &  \frac{\sqrt{a}}{\lambda} \sqrt{\frac{\lambda^2}{\kappa^2} + \lambda b^2}.
	\EEAS
	The simplification with $\lambda$ in the last equality concludes the proof.
\end{proof}

\clearpage
\subsection{Additional numerical experiments}

\subsubsection{Legend}
~\\
\begin{figure}[!htb]
\psfrag{data1}{\scriptsize SAGA}
\psfrag{data2}{\scriptsize Sgd}
\psfrag{data3}{\scriptsize SVRG}
\psfrag{data4}{\scriptsize Katyusha}
\psfrag{data5}{\scriptsize AccSAGA}
\psfrag{data6}{\scriptsize AccSgd}
\psfrag{data7}{\scriptsize AccSVRG}
\psfrag{data8}{\scriptsize AccKat.}
\includegraphics[width=\textwidth]{figs/classif/legend.eps}
\end{figure}

\subsubsection{datasets}
~\\
\begin{table}[!htb]
	\centering
	\begin{tabular}{rcccc}
		& Sonar UCI (\texttt{Son}) & Madelon UCI (\texttt{Mad}) & Random (\texttt{Ran}) &  Sido0 (\texttt{Sid})\\
		\hline
		\# samples $N$ & $208$ & $2000$ & $4000$ & $12678$ \\
		Dimension $d$ & $60$ & $500$ & $1500$ & $4932$
	\end{tabular}
	\vspace{0.2cm}
	\caption{Datasets used in the experiments.}
	\label{table:datasets}
\end{table}

\clearpage
\subsubsection{Quadratic loss}

\paragraph{Sonar dataset}
~
\begin{figure}[h!]
	\centering
	\psfrag{valf}[b]{\scriptsize $f(x)-f(x^*)$}
	\psfrag{xlabel}{\footnotesize Epoch}
	\includegraphics[width=0.45\textwidth]{figs/classif/leastsquare_sonar_well_cond_208_60_nite.eps}
	\psfrag{xlabel}{\footnotesize Time (sec)}
	\psfrag{valf}[b]{\scriptsize }
	\includegraphics[width=0.45\textwidth]{figs/classif/leastsquare_sonar_well_cond_208_60_time.eps}

	\psfrag{valf}[b]{\scriptsize $f(x)-f(x^*)$}
	\psfrag{xlabel}{\footnotesize Epoch}
	\includegraphics[width=0.45\textwidth]{figs/classif/leastsquare_sonar_regular_cond_208_60_nite.eps}
	\psfrag{xlabel}{\footnotesize Time (sec)}
	\psfrag{valf}[b]{\scriptsize }
	\includegraphics[width=0.45\textwidth]{figs/classif/leastsquare_sonar_regular_cond_208_60_time.eps}

	\psfrag{valf}[b]{\scriptsize $f(x)-f(x^*)$}
	\psfrag{xlabel}{\footnotesize Epoch}
	\includegraphics[width=0.45\textwidth]{figs/classif/leastsquare_sonar_bad_cond_208_60_nite.eps}
	\psfrag{xlabel}{\footnotesize Time (sec)}
	\psfrag{valf}[b]{\scriptsize }
	\includegraphics[width=0.45\textwidth]{figs/classif/leastsquare_sonar_bad_cond_208_60_time.eps}
	
	\caption{Quadratic loss  with (top to bottom) good, moderate and bad conditioning using \texttt{Son} dataset.}
\end{figure}

\clearpage
\paragraph{Madelon dataset}
~
\begin{figure}[h!]
	\centering
	\psfrag{valf}[b]{\scriptsize $f(x)-f(x^*)$}
	\psfrag{xlabel}{\footnotesize Epoch}
	\includegraphics[width=0.45\textwidth]{figs/classif/leastsquare_madelon_well_cond_2000_500_nite.eps}
	\psfrag{xlabel}{\footnotesize Time (sec)}
	\psfrag{valf}[b]{\scriptsize }
	\includegraphics[width=0.45\textwidth]{figs/classif/leastsquare_madelon_well_cond_2000_500_time.eps}

	\psfrag{valf}[b]{\scriptsize $f(x)-f(x^*)$}
	\psfrag{xlabel}{\footnotesize Epoch}
	\includegraphics[width=0.45\textwidth]{figs/classif/leastsquare_madelon_regular_cond_2000_500_nite.eps}
	\psfrag{xlabel}{\footnotesize Time (sec)}
	\psfrag{valf}[b]{\scriptsize }
	\includegraphics[width=0.45\textwidth]{figs/classif/leastsquare_madelon_regular_cond_2000_500_time.eps}

	\psfrag{valf}[b]{\scriptsize $f(x)-f(x^*)$}
	\psfrag{xlabel}{\footnotesize Epoch}
	\includegraphics[width=0.45\textwidth]{figs/classif/leastsquare_madelon_bad_cond_2000_500_nite.eps}
	\psfrag{xlabel}{\footnotesize Time (sec)}
	\psfrag{valf}[b]{\scriptsize }
	\includegraphics[width=0.45\textwidth]{figs/classif/leastsquare_madelon_bad_cond_2000_500_time.eps}
	
	\caption{Quadratic loss  with (top to bottom) good, moderate and bad conditioning using \texttt{Mad} dataset.}
\end{figure}

\clearpage
\paragraph{Random dataset}
~
\begin{figure}[h!]
	\centering
	\psfrag{valf}[b]{\scriptsize $f(x)-f(x^*)$}
	\psfrag{xlabel}{\footnotesize Epoch}
	\includegraphics[width=0.45\textwidth]{figs/classif/leastsquare_random_well_cond_4000_1500_nite.eps}
	\psfrag{xlabel}{\footnotesize Time (sec)}
	\psfrag{valf}[b]{\scriptsize }
	\includegraphics[width=0.45\textwidth]{figs/classif/leastsquare_random_well_cond_4000_1500_time.eps}

	\psfrag{valf}[b]{\scriptsize $f(x)-f(x^*)$}
	\psfrag{xlabel}{\footnotesize Epoch}
	\includegraphics[width=0.45\textwidth]{figs/classif/leastsquare_random_regular_cond_4000_1500_nite.eps}
	\psfrag{xlabel}{\footnotesize Time (sec)}
	\psfrag{valf}[b]{\scriptsize }
	\includegraphics[width=0.45\textwidth]{figs/classif/leastsquare_random_regular_cond_4000_1500_time.eps}

	\psfrag{valf}[b]{\scriptsize $f(x)-f(x^*)$}
	\psfrag{xlabel}{\footnotesize Epoch}
	\includegraphics[width=0.45\textwidth]{figs/classif/leastsquare_random_bad_cond_4000_1500_nite.eps}
	\psfrag{xlabel}{\footnotesize Time (sec)}
	\psfrag{valf}[b]{\scriptsize }
	\includegraphics[width=0.45\textwidth]{figs/classif/leastsquare_random_bad_cond_4000_1500_time.eps}
	
	\caption{Quadratic loss  with (top to bottom) good, moderate and bad conditioning using \texttt{Ran} dataset.}
\end{figure}

\clearpage
\paragraph{Sido0 dataset}
~

\begin{figure}[h!]
	\centering
	\psfrag{valf}[b]{\scriptsize $f(x)-f(x^*)$}
	\psfrag{xlabel}{\footnotesize Epoch}
	\includegraphics[width=0.45\textwidth]{figs/classif/leastsquare_sido0_well_cond_12678_4932_nite.eps}
	\psfrag{xlabel}{\footnotesize Time (sec)}
	\psfrag{valf}[b]{\scriptsize }
	\includegraphics[width=0.45\textwidth]{figs/classif/leastsquare_sido0_well_cond_12678_4932_time.eps}

	\psfrag{valf}[b]{\scriptsize $f(x)-f(x^*)$}
	\psfrag{xlabel}{\footnotesize Epoch}
	\includegraphics[width=0.45\textwidth]{figs/classif/leastsquare_sido0_regular_cond_12678_4932_nite.eps}
	\psfrag{xlabel}{\footnotesize Time (sec)}
	\psfrag{valf}[b]{\scriptsize }
	\includegraphics[width=0.45\textwidth]{figs/classif/leastsquare_sido0_regular_cond_12678_4932_time.eps}

	\psfrag{valf}[b]{\scriptsize $f(x)-f(x^*)$}
	\psfrag{xlabel}{\footnotesize Epoch}
	\includegraphics[width=0.45\textwidth]{figs/classif/leastsquare_sido0_bad_cond_12678_4932_nite.eps}
	\psfrag{xlabel}{\footnotesize Time (sec)}
	\psfrag{valf}[b]{\scriptsize }
	\includegraphics[width=0.45\textwidth]{figs/classif/leastsquare_sido0_bad_cond_12678_4932_time.eps}
	
	\caption{Quadratic loss  with (top to bottom) good, moderate and bad conditioning using \texttt{Sid} dataset.}
\end{figure}

\clearpage
\subsubsection{Logistic loss}

\paragraph{Sonar dataset}
~
\begin{figure}[h!]
	\centering
	\psfrag{valf}[b]{\scriptsize $f(x)-f(x^*)$}
	\psfrag{xlabel}{\footnotesize Epoch}
	\includegraphics[width=0.45\textwidth]{figs/classif/logistic_sonar_well_cond_208_60_nite.eps}
	\psfrag{xlabel}{\footnotesize Time (sec)}
	\psfrag{valf}[b]{\scriptsize }
	\includegraphics[width=0.45\textwidth]{figs/classif/logistic_sonar_well_cond_208_60_time.eps}

	\psfrag{valf}[b]{\scriptsize $f(x)-f(x^*)$}
	\psfrag{xlabel}{\footnotesize Epoch}
	\includegraphics[width=0.45\textwidth]{figs/classif/logistic_sonar_regular_cond_208_60_nite.eps}
	\psfrag{xlabel}{\footnotesize Time (sec)}
	\psfrag{valf}[b]{\scriptsize }
	\includegraphics[width=0.45\textwidth]{figs/classif/logistic_sonar_regular_cond_208_60_time.eps}

	\psfrag{valf}[b]{\scriptsize $f(x)-f(x^*)$}
	\psfrag{xlabel}{\footnotesize Epoch}
	\includegraphics[width=0.45\textwidth]{figs/classif/logistic_sonar_bad_cond_208_60_nite.eps}
	\psfrag{xlabel}{\footnotesize Time (sec)}
	\psfrag{valf}[b]{\scriptsize }
	\includegraphics[width=0.45\textwidth]{figs/classif/logistic_sonar_bad_cond_208_60_time.eps}
	
	\caption{Logistic loss  with (top to bottom) good, moderate and bad conditioning using \texttt{Son} dataset.}
\end{figure}

\clearpage
\paragraph{Madelon dataset}
~
\begin{figure}[h!]
	\centering
	\psfrag{valf}[b]{\scriptsize $f(x)-f(x^*)$}
	\psfrag{xlabel}{\footnotesize Epoch}
	\includegraphics[width=0.45\textwidth]{figs/classif/logistic_madelon_well_cond_2000_500_nite.eps}
	\psfrag{xlabel}{\footnotesize Time (sec)}
	\psfrag{valf}[b]{\scriptsize }
	\includegraphics[width=0.45\textwidth]{figs/classif/logistic_madelon_well_cond_2000_500_time.eps}

	\psfrag{valf}[b]{\scriptsize $f(x)-f(x^*)$}
	\psfrag{xlabel}{\footnotesize Epoch}
	\includegraphics[width=0.45\textwidth]{figs/classif/logistic_madelon_regular_cond_2000_500_nite.eps}
	\psfrag{xlabel}{\footnotesize Time (sec)}
	\psfrag{valf}[b]{\scriptsize }
	\includegraphics[width=0.45\textwidth]{figs/classif/logistic_madelon_regular_cond_2000_500_time.eps}

	\psfrag{valf}[b]{\scriptsize $f(x)-f(x^*)$}
	\psfrag{xlabel}{\footnotesize Epoch}
	\includegraphics[width=0.45\textwidth]{figs/classif/logistic_madelon_bad_cond_2000_500_nite.eps}
	\psfrag{xlabel}{\footnotesize Time (sec)}
	\psfrag{valf}[b]{\scriptsize }
	\includegraphics[width=0.45\textwidth]{figs/classif/logistic_madelon_bad_cond_2000_500_time.eps}
	
	\caption{Logistic loss  with (top to bottom) good, moderate and bad conditioning using \texttt{Mad} dataset.}
\end{figure}

\clearpage
\paragraph{Random dataset}
~
\begin{figure}[h!]
	\centering
	\psfrag{valf}[b]{\scriptsize $f(x)-f(x^*)$}
	\psfrag{xlabel}{\footnotesize Epoch}
	\includegraphics[width=0.45\textwidth]{figs/classif/logistic_random_well_cond_4000_1500_nite.eps}
	\psfrag{xlabel}{\footnotesize Time (sec)}
	\psfrag{valf}[b]{\scriptsize }
	\includegraphics[width=0.45\textwidth]{figs/classif/logistic_random_well_cond_4000_1500_time.eps}

	\psfrag{valf}[b]{\scriptsize $f(x)-f(x^*)$}
	\psfrag{xlabel}{\footnotesize Epoch}
	\includegraphics[width=0.45\textwidth]{figs/classif/logistic_random_regular_cond_4000_1500_nite.eps}
	\psfrag{xlabel}{\footnotesize Time (sec)}
	\psfrag{valf}[b]{\scriptsize }
	\includegraphics[width=0.45\textwidth]{figs/classif/logistic_random_regular_cond_4000_1500_time.eps}

	\psfrag{valf}[b]{\scriptsize $f(x)-f(x^*)$}
	\psfrag{xlabel}{\footnotesize Epoch}
	\includegraphics[width=0.45\textwidth]{figs/classif/logistic_random_bad_cond_4000_1500_nite.eps}
	\psfrag{xlabel}{\footnotesize Time (sec)}
	\psfrag{valf}[b]{\scriptsize }
	\includegraphics[width=0.45\textwidth]{figs/classif/logistic_random_bad_cond_4000_1500_time.eps}
	
	\caption{Logistic loss  with (top to bottom) good, moderate and bad conditioning using \texttt{Ran} dataset.}
\end{figure}

\clearpage
\paragraph{Sido0 dataset}
~

\begin{figure}[h!]
	\centering
	\psfrag{valf}[b]{\scriptsize $f(x)-f(x^*)$}
	\psfrag{xlabel}{\footnotesize Epoch}
	\includegraphics[width=0.45\textwidth]{figs/classif/logistic_sido0_well_cond_12678_4932_nite.eps}
	\psfrag{xlabel}{\footnotesize Time (sec)}
	\psfrag{valf}[b]{\scriptsize }
	\includegraphics[width=0.45\textwidth]{figs/classif/logistic_sido0_well_cond_12678_4932_time.eps}

	\psfrag{valf}[b]{\scriptsize $f(x)-f(x^*)$}
	\psfrag{xlabel}{\footnotesize Epoch}
	\includegraphics[width=0.45\textwidth]{figs/classif/logistic_sido0_regular_cond_12678_4932_nite.eps}
	\psfrag{xlabel}{\footnotesize Time (sec)}
	\psfrag{valf}[b]{\scriptsize }
	\includegraphics[width=0.45\textwidth]{figs/classif/logistic_sido0_regular_cond_12678_4932_time.eps}

	\psfrag{valf}[b]{\scriptsize $f(x)-f(x^*)$}
	\psfrag{xlabel}{\footnotesize Epoch}
	\includegraphics[width=0.45\textwidth]{figs/classif/logistic_sido0_bad_cond_12678_4932_nite.eps}
	\psfrag{xlabel}{\footnotesize Time (sec)}
	\psfrag{valf}[b]{\scriptsize }
	\includegraphics[width=0.45\textwidth]{figs/classif/logistic_sido0_bad_cond_12678_4932_time.eps}
	
	\caption{Logistic loss  with (top to bottom) good, moderate and bad conditioning using \texttt{Sid} dataset.}
\end{figure}

\clearpage

{\small 
	\bibliographystyle{agsm}
	\bibliography{reg_non_acc}
}

\end{document}